\documentclass[reqno]{amsart}
\usepackage{amsmath, amssymb, amsthm, epsfig}
\usepackage{hyperref, latexsym}
\usepackage{url}
\usepackage{color}
\usepackage{fullpage}
\usepackage{setspace}
\usepackage{graphics}

\allowdisplaybreaks

\def\today{\ifcase\month\or
	January\or February\or March\or April\or May\or June\or
	July\or August\or September\or October\or November\or December\fi
	\space\number\day, \number\year}

\newtheorem{theorem}{Theorem}

\newtheorem{lemma}[theorem]{Lemma}

\theoremstyle{definition}

\theoremstyle{remark}

\newcommand{\de}[0]{\mathrel{\mathop:}=}
\newcommand{\ie}[0]{\mathrm{i}}

\newcommand{\R}{\mathbb{R}}
\newcommand{\N}{\mathbb{N}}

\newcommand{\Z}{\mathbb{Z}}

\newcommand{\ds}{\text{\rm d}s}

\renewcommand{\d}{\text{\rm d}}
\newcommand{\du}{\text{\rm d}u}

\newcommand{\im}{{\rm Im}\,}
\newcommand{\re}{{\rm Re}\,}

\begin{document}
	
	\title[Explicit conditional bounds for $\zeta(s)$  \\ at the edge of the critical strip]{Explicit conditional bounds for $\zeta(s)$ \\ at the edge of the critical strip}
	\author[Andrés~Chirre and Blas~Molero]{Andrés~Chirre and Blas Molero}
	\subjclass[2010]{11M06, 11M26, 41A30}
	\keywords{Riemann zeta-function, Riemann hypothesis, bandlimited functions}
	\address{Departamento de Ciencias - Sección Matemáticas, Pontificia Universidad Católica del Perú, Av. Universitaria 1801, San Miguel 15088, Lima, Perú}
	\email{cchirre@pucp.edu.pe}
	\address{Departamento de Ciencias - Sección Matemáticas, Pontificia Universidad Católica del Perú, Av. Universitaria 1801, San Miguel 15088, Lima, Perú}
	\email{blas.molero@pucp.edu.pe}
	\allowdisplaybreaks
	\numberwithin{equation}{section}
	
	\begin{abstract}
		In this paper, we obtain explicit bounds for the real part of the logarithmic derivative of the Riemann zeta-function on the line $\re s=1$, assuming the Riemann hypothesis. The proof combines the Guinand--Weil explicit formula with extremal bandlimited
		majorants and minorants for the Poisson kernel. As an application, we revisit the classical estimates of Littlewood for the modulus of the Riemann zeta-function and of its reciprocal on the line $\re{s}=1$, and derive a slight refinement of the bounds of Lamzouri, Li, and Soundararajan. In addition, we establish an explicit bound for the modulus of the logarithmic derivative of the Riemann zeta-function on the line $\re{s}=1$ under the Riemann hypothesis, improving the lower-order term in a result of Chirre, Hagen, and Simoni\v{c}.
	\end{abstract}
	\maketitle
	\thispagestyle{empty}
	
	\section{Introduction}
	A central theme in analytic number theory is to make the influence of the zeros of the Riemann zeta-function on arithmetic phenomena quantitatively explicit. The line $\re s=1$ is a particularly delicate region, where the interaction between sums over prime numbers and the distribution of zeros controls the size of the Riemann zeta-function and its logarithmic derivative. In this setting, we develop an explicit analysis and obtain new bounds that refine the known asymptotic behavior by yielding improved lower–order terms.
	
	Let $\zeta(s)$ be the Riemann zeta function. Assuming the Riemann hypothesis (RH), classical estimates due to Littlewood \cite{Li1, Li2} assert that, as $t \to \infty$,
	\begin{equation*}
		|\zeta(1+it)|\leq (2e^\gamma+o(1))\log\log t,  \,\,\,\,\,\,\,\,\,\,\,\,\,\,\, \mbox{and}  \,\,\,\,\,\,\,\,\,\,\,\,\,\,\,\,
		\dfrac{1}{|\zeta(1+it)|}\leq \left(\dfrac{12e^\gamma}{\pi^2}+o(1)\right)\log\log t,
	\end{equation*}  
	where $\gamma=0.5772\ldots$ is the Euler-Mascheroni constant. These asymptotic behaviors have remained unimproved for a century, with efforts instead focused on sharpening lower-order error terms. The sharpest explicit versions of these results were obtained by Lamzouri, Li, and Soundararajan \cite[p.~2394]{Sound}, who proved that, under RH and for $t\geq 10^{10}$,
	\begin{equation} \label{11_52am}
		|\zeta(1+it)|\leq 2e^\gamma\left(\log\log t - \log2+ \frac{1}{2}  +\frac{1}{\log\log t}\right),
	\end{equation}  
	and 
	\begin{align} \label{11_53am}
		\dfrac{1}{|\zeta(1+it)|}\leq \dfrac{12e^\gamma}{\pi^2}\left(\log\log t - \log2+ \frac{1}{2}  +\frac{1}{\log\log t}+\frac{14\log\log t}{\log t}\right).
	\end{align}
	On the other hand, the classical conditional bound for $\zeta'(s)/\zeta(s)$ at $s=1+it$ is $O(\log\log t)$, see \cite[Corollary~13.14]{MV} for example. The sharpest explicit version of this result is due to Chirre, Hagen, and Simoni\v{c} \cite[Theorem~5]{ChiSim}, who proved that, under RH and for $t\ge 10^{30}$,
	\begin{align} \label{12_32pm}
		\left|\frac{\zeta'}{\zeta}\left(1+\ie t    \right)\right| \leq 2\log\log t -0.4989 + \dfrac{5.35(\log\log t)^2}{\log t}. 
	\end{align}
	These bounds have also been investigated in the context of families of $L$-functions, where several generalizations are known, see \cite{Langu,Lum,SimonicPalo}.
	
	The aim of this paper is to refine estimates \eqref{11_52am} and \eqref{11_53am} by improving the constants in the lower-order terms, and to improve the order of magnitude of the lower-order error term in \eqref{12_32pm}.

	\subsection{Main results} 
	We now present our main results. All of them are conditional on the Riemann hypothesis and hold for $
	t \ge e^{18} = 6.565\ldots \cdot 10^{7}$.
	Our first result concerns bounds for the real part of the logarithmic derivative of the Riemann zeta-function on the line $\re{s}=1$.
	

	\begin{theorem}
		\label{thm:Rezeta'/zeta}
		Assume the Riemann hypothesis. Then, for $t\geq e^{18}$, 
		\begin{align*} 
			\re\frac{\zeta'}{\zeta}\left(1+\ie t\right) \leq 2\log\log t +1-\gamma-\log4 +\frac{8\log\log t}{\log t}-\dfrac{8.6544}{\log t},
		\end{align*}
		and
		\begin{align*} 
			-\re\frac{\zeta'}{\zeta}\left(1+\ie t\right) \leq 2\log\log t +1-\gamma-\log4 - \frac{8\log\log t}{\log t}+\dfrac{6.9856}{\log t}.
		\end{align*}
	\end{theorem}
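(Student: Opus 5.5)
Under RH we write each zero as $\rho=\tfrac12+\ie\gamma$. The starting point is the partial fraction expansion of $\zeta'/\zeta$. Taking real parts at $s=1+\ie t$ gives
\begin{equation*}
\re\frac{\zeta'}{\zeta}(1+\ie t)=\sum_{\gamma}\frac{1/2}{(1/4)+(t-\gamma)^2}-\frac12\log\pi+\frac12\re\frac{\Gamma'}{\Gamma}\Bigl(\frac{1+\ie t}{2}+\frac12\Bigr)+\re\frac{1}{\ie t}+\re\frac{1}{1+\ie t}.
\end{equation*}
The $\Gamma$ term equals $\tfrac12\log(t/2)+O(t^{-2})$. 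The sum over zeros is therefore the object to control: it is a sum of the Poisson kernel $h(x)=\frac{1/2}{1/4+x^2}$ evaluated at $t-\gamma$.

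We replace $h$ by bandlimited functions of exponential type $2\pi\Delta$. Let $m_\Delta\le h\le M_\Delta$ be the extremal minorant and majorant for the Poisson kernel, as constructed by Carneiro, Chandee and Milinovich in the Gaussian subordination framework. The Guinand--Weil explicit formula applied to $M_\Delta(t-\cdot)$ gives
\begin{equation*}
\sum_\gamma M_\Delta(t-\gamma)=\frac{1}{2\pi}\int M_\Delta(t-x)\re\frac{\Gamma'}{\Gamma}\Bigl(\frac14+\frac{\ie x}{2}\Bigr)\d x-\frac{\log\pi}{2\pi}\widehat M_\Delta(0)-\frac1{2\pi}\sum_{n\le e^{2\pi\Delta}}\frac{\Lambda(n)}{\sqrt n}\,\widehat M_\Delta\Bigl(\frac{\log n}{2\pi}\Bigr)2\cos(t\log n)+O\bigl(M_\Delta(t\pm \tfrac{\ie}{2})\bigr).
\end{equation*}
There is an analogous formula for $m_\Delta$. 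Since $\widehat M_\Delta(0)=\int M_\Delta$ exceeds $\int h=\pi$ by an explicit amount of order $e^{-\pi\Delta}$-type decay, the archimedean term in the formula for $M_\Delta$ roughly cancels the $-\tfrac12\log t$ coming from the $\Gamma$ factor above. What remains is of order $\frac{\log t}{\Delta}$-weighted corrections, coming from the explicit values of $\int(M_\Delta-h)$ in terms of $\Delta$.

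The prime sum is bounded trivially by putting absolute values on the cosines. This gives $\sum_{n\le e^{2\pi\Delta}}\Lambda(n)n^{-1/2}|\widehat M_\Delta(\log n/2\pi)|$. Because the Fourier transforms of the extremal functions are explicit, this sum can be handled with the explicit prime number theorem under RH, $\psi(x)=x+O(\sqrt x\log^2x)$, followed by partial summation.

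For the upper bound on $\re\zeta'/\zeta$ we use $M_\Delta$, and for the bound on $-\re\zeta'/\zeta$ we use $m_\Delta$. In each case we choose $\pi\Delta=\log\log t-c$ for a suitable constant $c$, so that the prime contribution, of size about $e^{\pi\Delta}/\Delta$, balances the archimedean error, of size about $\log t\, e^{-\pi\Delta}$. Expanding in $\log\log t$ then produces the main term $2\log\log t$. The constant $1-\gamma-\log4$ arises from the Mertens-type evaluation of $\sum\Lambda(n)/n$ weighted by the explicit transform, and the secondary terms $\pm8\log\log t/\log t$ come from the $\Delta$-dependent corrections.

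Finally, for $t\ge e^{18}$ we bound all the remaining pieces numerically: the Stirling remainders, the term $1/|t|$, the error in $\psi$, and the boundary terms at $\pm\ie/2$. This is what fixes the constants $8.6544$ and $6.9856$.

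The main obstacle is the precise asymptotic evaluation of the prime sum against the explicit Fourier transforms of the extremal functions. This must be carried out sharply enough that the constant term comes out exactly as $1-\gamma-\log4$, with the second-order term of size exactly $8\log\log t/\log t$ and explicit remainders.
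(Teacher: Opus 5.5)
\textbf{There is a genuine gap: the prime-sum estimate.} Your architecture is the paper's. You use the Poisson-kernel representation of $\re\frac{\zeta'}{\zeta}(1+\ie t)$, the extremal functions $h^{\pm}_\Delta$, Guinand--Weil, absolute values on the prime sum, and $\pi\Delta=\log\log t-c$. You rightly flag the prime sum as the crux, but the tool you name for it is too weak to give the statement.

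Since $\widehat{h^{\pm}_\Delta}$ is explicit, for $n\le e^{2\pi\Delta}$ one has $n^{-1/2}\widehat{h^{\pm}_\Delta}(\frac{\log n}{2\pi})=\pi\frac{e^{\pi\Delta}}{c^{\pm}_\Delta}\cdot\frac1n(1-\frac{n}{e^{2\pi\Delta}})$. So the prime term is bounded by $\frac{e^{\pi\Delta}}{c^{\pm}_\Delta}S(x)$, with $S(x)=\sum_{n\le x}\frac{\Lambda(n)}{n}(1-\frac nx)$ and $x=e^{2\pi\Delta}=(\frac{\log t}{2})^2$. Partial summation gives $S(x)=\int_1^x\psi(u)u^{-2}\,\d u=\log x-1-\gamma-\int_x^\infty(\psi(u)-u)u^{-2}\,\d u$. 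With only $|\psi(u)-u|\le\frac{1}{8\pi}\sqrt u\log^2u$, the best tail bound is about $\frac{\log^2x}{4\pi\sqrt x}\approx\frac{2(\log\log t)^2}{\pi\log t}$. That term is not $O(\log\log t/\log t)$, so it cannot be absorbed into $\frac{8\log\log t}{\log t}-\frac{8.6544}{\log t}$. Your route therefore gives a bound with a $(\log\log t)^2/\log t$ remainder, which is exactly the loss in the Chirre--Hagen--Simoni\v{c} bound, and not the theorem.

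The missing ingredient is the explicit formula for this Riesz mean itself (Lamzouri--Li--Soundararajan, Lemma~2.4). Apply Perron's formula with kernel $x^s/(s(s+1))$ to $-\frac{\zeta'}{\zeta}(s+1)$ and shift the contour to get
\[
S(x)=\log x-1-\gamma+\frac{\log 2\pi}{x}-\sum_{\rho}\frac{x^{\rho-1}}{\rho(\rho-1)}-\sum_{k\ge1}\frac{x^{-2k-1}}{2k(2k+1)}.
\]
Under RH the zero sum converges absolutely and is at most $x^{-1/2}\sum_\rho|\rho|^{-2}=2|B|x^{-1/2}$, with $2|B|\approx0.046$. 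This error is $O(1/\sqrt{x})=O(1/\log t)$ with no logarithmic loss, and that is what makes constants at the $1/\log t$ level attainable.

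\textbf{Your size heuristics also need correcting}, since they drive the choice of $\Delta$ and the bookkeeping of lower-order terms.
\begin{itemize}
\item Because $\frac{e^{\pi\Delta}}{c^{\pm}_\Delta}=\frac{e^{2\pi\Delta}}{(e^{\pi\Delta}\mp1)^2}\to1$, the prime contribution is about $2\pi\Delta-1-\gamma$, not $e^{\pi\Delta}/\Delta$.
\item Since $\frac{1}{2\pi}\widehat{h^{\pm}_\Delta}(0)=\frac12+\frac{1}{\pm e^{\pi\Delta}-1}$, the archimedean remainder after cancelling $\frac12\log\frac{t}{2\pi}$ is $\frac{\log(t/2\pi)}{e^{\pi\Delta}\mp1}$, not a $\log t/\Delta$ correction.
\item Balancing $e^{\pi\Delta}/\Delta$ against $\log t\,e^{-\pi\Delta}$, as you propose, would give $\pi\Delta\approx\frac12\log\log t$, which contradicts your own choice of $\Delta$.
\end{itemize}
The correct step is to minimize $2\pi\Delta+\log t\,e^{-\pi\Delta}$, which gives $e^{\pi\Delta}=\frac{\log t}{2}$. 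The constant $1-\gamma-\log4$ is then $-(1+\gamma)$ from $S(x)$ plus $2-2\log2$ from this optimization. The term $\pm\frac{8\log\log t}{\log t}$ comes from the factor $\frac{e^{2\pi\Delta}}{(e^{\pi\Delta}\mp1)^2}=1\pm\frac{2}{e^{\pi\Delta}}+\cdots$ multiplying $2\pi\Delta$.

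\textbf{Two smaller points.} First, your opening identity has reversed signs. It should read $+\frac12\log\pi-\frac12\re\frac{\Gamma'}{\Gamma}(\frac32+\frac{\ie t}{2})$, with argument $\frac s2+1$ rather than $\frac s2+\frac12$, which gives $-\frac12\log\frac{t}{2\pi}$. Second, saying the $\Gamma$-integral ``roughly cancels'' is not enough. You need it evaluated with an explicit error well below $1/\log t$; the paper gets $(\frac12+\frac{1}{\pm e^{\pi\Delta}-1})\log\frac t2+O^*(1.3/t^2)$ by residues.
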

		
	As an application of Theorem \ref{thm:Rezeta'/zeta}, and combining ideas from \cite{Sound}, we obtain the following estimates.
	
	\begin{theorem}
		\label{thm:zeta}
		Assume the Riemann hypothesis. Then, for $t\geq e^{18}$,
		\begin{align} \label{3pm}
			|\zeta(1+it)|\leq 2e^\gamma\left(\log\log t - \log2+ \frac{1}{2}  +\frac{0.2674}{\log\log t}-\dfrac{2.6\log\log t}{\log t}\right),
		\end{align}
		and
		\begin{align} \label{3pm2}
			\left|\dfrac{1}{\zeta(1+it)}\right| \leq\frac{12e^\gamma}{\pi^2}\bigg(\log\log t -\log 2 +\frac{1}{2}+\frac{5}{8\log\log t} +\frac{10.8}{(\log\log t)^2}\bigg).
		\end{align}
	\end{theorem}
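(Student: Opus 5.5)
We follow the Lamzouri--Li--Soundararajan approach: split $\log\zeta(1+it)$ into a short Euler product and a tail, and use Theorem~\ref{thm:Rezeta'/zeta} to control the tail. Write
\[
\log|\zeta(1+it)| = \log|\zeta(\sigma_0+it)| - \int_1^{\sigma_0}\re\frac{\zeta'}{\zeta}(\sigma+it)\,\d\sigma ,
\]
with $\sigma_0>1$ to be chosen. The idea is to take $\sigma_0 = 1 + c/\log\log t$, or alternatively to truncate the Dirichlet series at $x\approx(\log t)^2$.

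\textbf{Step 1 (the tail).} The integral over $[1,\sigma_0]$ has length $O(1/\log\log t)$. It therefore suffices to know that $\mp\re(\zeta'/\zeta)(\sigma+it)$ is at most about $2\log\log t + O(1)$ uniformly for $1\le\sigma\le\sigma_0$. For $\sigma=1$ this is Theorem~\ref{thm:Rezeta'/zeta}. The same Guinand--Weil/extremal-function argument, run with the Poisson kernel at abscissa $\sigma-\tfrac12$, gives the analogous bound for $\sigma$ slightly larger than $1$; monotonicity in $\sigma$ of the bandlimited bounds should make it at most the $\sigma=1$ bound.

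\textbf{Step 2 (the short product).} Bound $\log|\zeta(\sigma_0+it)|$ by the prime sum
\[
\sum_{n}\frac{\Lambda(n)}{n^{\sigma_0}\log n}\,\re n^{-it}.
\]
Cut it at $x=\log^2 t$. The primes $p\le x$ contribute at most $\sum_{p\le x}-\log(1-p^{-\sigma_0})$, and by Mertens with explicit constants (Rosser--Schoenfeld/Dusart) this is $\log\log x+\gamma+O(\cdot)$. For the reciprocal one uses $\log(1+p^{-\sigma_0})$ instead, which produces the factor $\zeta(2)^{-1}=6/\pi^2$ and hence $12e^\gamma/\pi^2$ after the factor $2$ from $\log\log x = \log 2 + \log\log t$. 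The primes $p>x$ are handled under RH: write $\zeta'/\zeta$ at $\sigma_0+it$ via the explicit formula, or equivalently integrate the $\sigma=1$ bound from Step~1 over $\sigma\in[\sigma_0,\infty)$ in a smoothed way.

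\textbf{Step 3 (assembling).} The main term is $2e^\gamma(\log\log t-\log2)$. The constant $+\tfrac12$ and the $1/\log\log t$ corrections come from optimizing $\sigma_0$: one balances $(\sigma_0-1)\cdot 2\log\log t$ against the loss $\log(\sigma_0-1)$-type terms from $\zeta(\sigma_0)\approx 1/(\sigma_0-1)$. For the upper bound the negative term $-2.6\log\log t/\log t$ is inherited from the $+8\log\log t/\log t$ in the first inequality of Theorem~\ref{thm:Rezeta'/zeta}, after integrating over an interval of length about $1/\log\log t$ and expanding the exponential. The second inequality of Theorem~\ref{thm:Rezeta'/zeta} handles $1/\zeta$; there the $1/\log t$ terms are absorbed into the $(\log\log t)^{-2}$ term since $t\ge e^{18}$.

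\textbf{Main obstacle.} The hardest part is the bookkeeping of explicit constants ($0.2674$, $5/8$, $10.8$) that must hold already from $t=e^{18}$. This requires explicit Mertens-type estimates in the range $x\ge e^{36}$ and careful Taylor bounds for $\exp$ and $\log(1\pm u)$. The other delicate point is extending Theorem~\ref{thm:Rezeta'/zeta} uniformly to $\sigma$ near $1$. I would first try to prove that the bounds at $\sigma>1$ are dominated by those at $\sigma=1$ via the explicit formula, and otherwise redo the extremal-function computation with $\sigma$ as a parameter.
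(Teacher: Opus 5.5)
Your plan has a genuine gap: the step that carries the content is left unspecified, and the mechanism you give for the constants does not work.

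\textbf{The shift to $\sigma_0>1$ only hurts.} If you balance $2(\sigma_0-1)\log\log t$ against $\log\zeta(\sigma_0)\approx-\log(\sigma_0-1)$, as in your Step~3, the optimum $\sigma_0-1=1/(2\log\log t)$ gives only $|\zeta(1+it)|\lesssim 2e\log\log t$. That is worse than Littlewood's $2e^\gamma\log\log t$. Suppose instead you use the product over $p\le x$ with $\log x=2\log\log t$. Moving from $1$ to $1+\delta$ then saves only $\sum_{p\le x}(1-p^{-\delta})/p\approx\int_0^{\delta\log x}\frac{1-e^{-u}}{u}\,\du<\delta\log x$. The $\sigma$-integral, however, costs $\delta\cdot 2\log\log t=\delta\log x$. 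So the optimum is $\delta\to0$, and Step~1 (extending Theorem~\ref{thm:Rezeta'/zeta} to $\sigma>1$) buys nothing.

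In the actual proof, the constant $\tfrac12-\log2$ does not come from optimizing $\sigma_0$. It comes from two pieces combining when $\log x\approx 2\log\log t$: the constant $1-\gamma-\log4$ of Theorem~\ref{thm:Rezeta'/zeta}, entering with weight $1/\log x$, and the $-1+\gamma/\log x$ of the smoothed Mertens sum (Lemma~\ref{12_01pm}).

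\textbf{The missing ingredient is the treatment of $n>x$ at $\sigma=1$.} ``Integrating the $\sigma=1$ bound over $[\sigma_0,\infty)$'' fails as stated, since a bound of size $\log\log t$ is not integrable there. A sharp cutoff instead produces a zero sum that is not absolutely convergent. What you need is an identity, namely the smoothed Perron formula of Lemma~\ref{logzeta}. It says $\log|\zeta(1+it)|$ equals the prime sum with weight $\log(x/n)/\log x$, minus $\frac{1}{\log x}\re\frac{\zeta'}{\zeta}(1+it)$, up to an error of size $\frac{1}{\sqrt{x}\log^2x}\sum_\rho|\rho-it|^{-2}$. The zero sum is then bounded through Lemma~\ref{4_04pm} by $\log(t/2\pi)+2\re\frac{\zeta'}{\zeta}(1+it)+\frac{7}{2t^2}$. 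This is how Theorem~\ref{thm:Rezeta'/zeta} actually enters. With $x=(c\log t)^2$, the zero sum is exactly the $2/c$ in $\min_c\frac18\bigl(2/c+(2\log c+\log4-1)^2\bigr)=0.2673\ldots$.

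\textbf{The reciprocal bound needs more than a sharp cutoff.} For $1/|\zeta|$, the sharp-cutoff $\log(1+p^{-\sigma_0})$ device must be replaced by the Lamzouri--Li--Soundararajan inequality for the weights $\frac{1}{n\log n}-\frac{1}{x\log x}$ (Lemma~\ref{12_31am}), which is what yields $\log\zeta(2)$. The leftover term $\frac{1}{\log x}\re\sum_{n\le x}\Lambda(n)n^{-1-it}(1-n/x)$ must then be converted back into $\re\frac{\zeta'}{\zeta}(1+it)$ via Lemma~\ref{11_18pm}.

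\textbf{You have also swapped the two inequalities of Theorem~\ref{thm:Rezeta'/zeta}.} The bound for $|\zeta|$ uses the upper bound on $-\re\frac{\zeta'}{\zeta}$, i.e.\ the second inequality. Its $-8\log\log t/\log t$ is what produces the negative term $-2.6\log\log t/\log t$. The bound for $1/|\zeta|$ uses the first inequality.
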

	Our bounds \eqref{3pm} and \eqref{3pm2} improve \eqref{11_52am} and \eqref{11_53am}, respectively, by reducing the constant in the numerator of the term $1/\log\log t$; moreover, in the upper bound we introduce a new negative lower-order term. The mentioned numerator in \eqref{3pm} comes from $$\min_{0<c<2}\frac{1}{8}\left(\frac{2}{c}+(2\log c +\log 4 -1)^2\right)=0.2673\ldots.$$
	
	\vspace{0.2cm}
	
	In our next application of Theorem \ref{thm:Rezeta'/zeta}, we bound the modulus of the logarithmic derivative of the Riemann zeta-function using Selberg’s moment formula, following \cite{ChiSim} but estimating the sum over the primes in a different manner.
	
	\begin{theorem} \label{thm:zeta'/zeta}
		Assume the Riemann hypothesis. Then, for $t\geq e^{18}$,
		\begin{equation*}	
			\left|\frac{\zeta'}{\zeta}(1+it)\right|  \leq   2\log\log t  + 0.0784 - \gamma +\dfrac{9.0581\log\log t}{\log t} - \dfrac{4.7}{\log t}.
		\end{equation*} 
		In particular, for $t\geq 10^{30}$ we have $|({\zeta'}/{\zeta})(1+it)|\leq   2\log\log t$. 
	\end{theorem}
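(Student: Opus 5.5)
We follow the strategy of \cite{ChiSim}, based on Selberg's moment formula, but treat the prime sum differently and use Theorem~\ref{thm:Rezeta'/zeta} to control the zero contribution. For $s=1+it$ and a parameter $x\ge 2$ (eventually $x\asymp(\log t)^2$, i.e.\ $\log x\approx 2\log\log t$), Selberg's formula gives
\begin{equation*}
\frac{\zeta'}{\zeta}(s)=-\sum_{n\le x}\frac{\Lambda(n)}{n^{s}}\,w_x(n)+\frac{1}{\log x}\sum_{\rho}\frac{x^{\rho-s}\,(1-x^{\rho-s})}{(s-\rho)^2}+E(x,t),
\end{equation*}
where $w_x(n)=\log(x/n)/\log x$ is the Fej\'er-type weight and $E(x,t)$ collects the trivial zeros and the pole, hence is $O(x^{-1}/\log x)$ and explicitly tiny. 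Taking absolute values, the prime sum is at most $\sum_{n\le x}\Lambda(n)n^{-1}\log(x/n)/\log x$.

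We evaluate this prime sum explicitly under RH, rather than bounding it crudely as in \cite{ChiSim}. Partial summation with $\sum_{n\le y}\Lambda(n)/n=\log y-\gamma+O(y^{-1/2}\log^2 y)$ (explicit under RH, via Schoenfeld-type bounds for $\psi$) gives
\begin{equation*}
\frac{1}{\log x}\int_1^x\frac{1}{y}\sum_{n\le y}\frac{\Lambda(n)}{n}\,\d y=\frac{\log x}{2}-\gamma+O\Big(\frac{1}{\log x}\Big).
\end{equation*}
With $x=(\log t)^2$ times a suitable constant, this accounts for the main term $\log\log t$ together with the $-\gamma$.

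The zero sum is bounded using RH, writing $\rho=\tfrac12+i\gamma'$, so that $|x^{\rho-s}|=x^{-1/2}$, together with $\sum_\rho 1/|s-\rho|^2=2\,\re\sum_\rho 1/(s-\rho)$. The sum $\sum_\rho\re\,1/(s-\rho)$ is expressed through the Hadamard product as $\re\frac{\zeta'}{\zeta}(s)+\tfrac12\log\frac{t}{2\pi}+O(1/t)$. Here $\re\frac{\zeta'}{\zeta}(1+it)\le 2\log\log t+\cdots$ by Theorem~\ref{thm:Rezeta'/zeta}, so this contribution is negligible next to $\log t$. The zero term is therefore at most $\frac{x^{-1/2}(1+x^{-1/2})}{\log x}\cdot 2\big(\tfrac12\log t+2\log\log t+O(1)\big)$. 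With $x=c(\log t)^2$ this equals $\frac{1}{\sqrt{c}\log x}+O\big(\frac{\log\log t}{\log t}\big)$, i.e.\ a constant over $\log\log t$, which is absorbed after optimizing $c$.

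The main obstacle is the bookkeeping. The target bound has no $1/\log\log t$ term, only $0.0784$ plus $\log\log t/\log t$ corrections, so the $1/\log x$ terms from the prime sum must combine with the zero term into something of order $O(1/\log t)$. A cleaner route may be to balance $x$ so that $\log x=2\log\log t+O(1)$ exactly and track the $-\gamma$ and $\log c$ constants. The constant $0.0784$ then comes from optimizing $c$, and the explicit $\log\log t/\log t$ and $1/\log t$ coefficients come from the RH error in $\psi$ (size $x^{-1/2}\log^2x\approx\log\log t^{2}/\log t$) and from the Theorem~\ref{thm:Rezeta'/zeta} input, evaluated numerically for $t\ge e^{18}$. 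The final claim for $t\ge10^{30}$ follows because $0.0784-\gamma<0$ dominates the remaining terms there, after a direct numerical check.
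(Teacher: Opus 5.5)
There is a genuine gap, and it is exactly where the main term and the constant $0.0784$ come from.

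\textbf{The starting identity is wrong.} The weight $\log(x/n)/\log x$ on $n\le x$ comes from the single-scale Perron formula with kernel $x^w/w^2$, as in the proof of Lemma \ref{logzeta}. That formula also contains the term $\frac{1}{\log x}\big(\frac{\zeta'}{\zeta}\big)'(s)$, which is essentially $-\frac{1}{\log x}\sum_\rho (s-\rho)^{-2}$. This term has no decaying factor $x^{\rho-s}$, and the best available bound for it is $\frac{1}{\log x}\sum_\rho|s-\rho|^{-2}\approx\frac{\log t}{\log x}$. Your zero sum, with $x^{\rho-s}-x^{2(\rho-s)}$, belongs to a different identity: Selberg's formula \eqref{11_45pm} with $y=x$. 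There the weights are $\Lambda_{x,x}(n)$ on $n\le x^2$, and $\sum_{n\le x^2}\Lambda_{x,x}(n)/n=\frac32\log x-\gamma+o(1)$.

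\textbf{The bookkeeping cannot close.} Your prime sum gives $\frac12\log x-\gamma=\log\log t+O(1)$, which is half the target's main term, and you never say where the other $\log\log t$ comes from. The consistent version with $x=c\log^2t$ gives $3\log\log t+O(1)$. More generally, in \eqref{11_45pm} the prime sum is $\log(xy)-\frac12\log y-\gamma$ and the zero term is roughly $\frac{\log t}{\sqrt{x}\,\log y}$. Requiring a main term $2\log\log t$ together with a bounded zero term forces $y$ to stay bounded. So no choice tying $y$ to $x\to\infty$ works. In your setup the zero term $\frac{1}{\sqrt c\log x}$ tends to $0$, so optimizing $c$ cannot produce $0.0784$.

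\textbf{The missing idea} is to take $xy=\log^2t$ with $y=e^{2\lambda}$ \emph{fixed}. The prime sum is then $2\log\log t-\lambda-\gamma$. The zero term, bounded via Lemma \ref{4_04pm} and Theorem \ref{thm:Rezeta'/zeta} as you propose, becomes $\frac{e^\lambda+1}{2\lambda}\big(1+O(\frac{\log\log t}{\log t})\big)$, which is a constant rather than $o(1)$. Balancing the two gives $\min_{\lambda>0}\big(\frac{e^\lambda+1}{2\lambda}-\lambda\big)=0.0783\ldots$ at $\lambda_0=2.1862$. The coefficient $9.0581\approx 2(e^{\lambda_0}+1)/\lambda_0$ comes from the $2\log\log t$ in Theorem \ref{thm:Rezeta'/zeta} multiplied by the zero-term prefactor.

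\textbf{The prime-sum estimate is too weak.} Evaluating it by partial summation from a pointwise RH bound $O(u^{-1/2}\log^2u)$ leaves an error of order $x^{-1/2}\log^2x\asymp(\log\log t)^2/\log t$. That is precisely the weaker term in \eqref{12_32pm} which the theorem improves. The paper avoids this by applying the same identity \eqref{11_45pm} at real $s=\sigma\to1^+$, using $-\frac{\zeta'}{\zeta}(\sigma)=\frac{1}{\sigma-1}-\gamma+O(\sigma-1)$ and $\sum_\rho|\rho|^{-2}=2|B|$. This yields $\sum_{n\le xy}\Lambda_{x,y}(n)/n=\log x+\frac12\log y-\gamma+O^*\big(\frac{2|B|(y^{1/2}+1)(xy)^{-1/2}}{\log y}\big)+O^*(c_{x,y}/9)$. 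The error is $O(1/\log t)$, and no $1/\log\log t$ terms appear.
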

	The main novelty here is an improvement over \eqref{12_32pm}, reducing the numerator of the lower-order term from $(\log\log t)^2$ to $\log\log t$, and adding a negative term. The constant term there is
	$$
	\min_{\lambda>0}\frac{e^\lambda+1}{2\lambda}-\lambda -\gamma = 0.0783\ldots-\gamma =-0.4989\ldots.
	$$





		
		\subsection{Notation} Throughout the paper, we use $\alpha=O^*(\beta)$ to mean that $|\alpha|\leq \beta$. For a function $f\in L^1(\R)$ we define its Fourier transform by $$\widehat{f}(\xi)=\int_{-\infty}^\infty f(x)e^{-2\pi i\xi x}\d x,
		$$
		where $\xi\in\R$. The symbol $\Gamma(s)$ denotes the Gamma function, and $\Lambda(n)$ denotes the von Mangoldt function defined by $\log p$ if $n=p^m$, where $p$ a prime number and $m\geq 1$ is an integer, and to be zero otherwise. The symbol $\rho$ denotes a non-trivial zero of $\zeta(s)$, and
		\begin{align} \label{3_35pm}
			B= -\sum_{\rho}\re\dfrac{1}{\rho} = \dfrac{\log 4\pi}{2}-1-\dfrac{\gamma}{2} = -0.02309\ldots,
		\end{align}
		see \cite[Corollary 10.14]{MV}. In particular, under RH we have $\sum_{\rho}\frac{1}{|\rho|^2}=2|B|$.


		\section{Preliminary lemmas}
		
		We begin with a representation lemma connecting the real part of the logarithmic derivative of the Riemann zeta-function and the Poisson kernel defined by
		\begin{align} \label{3_36pm}
			h(x)=\dfrac{\frac{1}{2}}{\frac{1}{4}+x^2}.
		\end{align}
		
		\begin{lemma} \label{4_04pm}
			Let $h$ be the function defined in \eqref{3_36pm}, and assume the Riemann hypothesis. Then, for $t>0$,
			\begin{equation*} 
				\re\dfrac{\zeta'}{\zeta}(1+it) = \displaystyle\sum_{\gamma}h(t-\gamma) - \frac{1}{2}\log\left(\frac{t}{2\pi}\right) +O^*\left(\dfrac{7}{4t^2}\right),
			\end{equation*}
			where the sum is taken over the ordinates $\gamma$ of the non-trivial zeros $\rho = \tfrac12 + i \gamma$ of $\zeta(s)$.
		\end{lemma}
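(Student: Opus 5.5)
The plan is to start from the Hadamard product representation of the logarithmic derivative,
\begin{equation*}
\frac{\zeta'}{\zeta}(s) = B + \sum_{\rho}\left(\frac{1}{s-\rho}+\frac{1}{\rho}\right) - \frac{1}{s-1} - \frac12\frac{\Gamma'}{\Gamma}\left(\frac{s}{2}+1\right) + \frac12\log\pi,
\end{equation*}
see \cite[Corollary 10.14]{MV}. Taking real parts at $s=1+it$, the constant $B$ cancels against $\sum_\rho \re(1/\rho)$ by \eqref{3_35pm}, since $\sum_\rho \re(1/\rho)=-B$ converges absolutely when the zeros are paired with their conjugates. Under RH we have $\rho=\tfrac12+i\gamma$, so
\begin{equation*}
\re\frac{1}{1+it-\rho}=\frac{\frac12}{\frac14+(t-\gamma)^2}=h(t-\gamma).
\end{equation*}
This produces the main sum $\sum_\gamma h(t-\gamma)$ exactly. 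The remaining terms are
\begin{equation*}
-\re\frac{1}{it} = 0, \qquad -\frac12\re\frac{\Gamma'}{\Gamma}\left(\frac32+\frac{it}{2}\right), \qquad \frac12\log\pi.
\end{equation*}

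Next, I will show that
\begin{equation*}
\re\frac{\Gamma'}{\Gamma}\left(\frac32+\frac{it}{2}\right) = \log\frac{t}{2}+O^*\left(\frac{7}{2t^2}\right).
\end{equation*}
Combined with $\tfrac12\log\pi$, this gives $-\tfrac12\log(t/2\pi)+O^*(7/(4t^2))$, which is the claim. To get this explicitly, I will use the Stirling-type expansion
\begin{equation*}
\frac{\Gamma'}{\Gamma}(z)=\log z-\frac{1}{2z}-\frac{1}{12z^2}+R(z),
\end{equation*}
with an explicit remainder valid for $\re z>0$. Alternatively, I can use the series $\psi(z)=-\gamma+\sum_{n\ge0}\left(\frac{1}{n+1}-\frac{1}{n+z}\right)$ and compare with an integral.

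The terms to expand for $z=\tfrac32+\tfrac{it}{2}$ are
\begin{equation*}
\re\log z=\log\frac t2+\frac12\log\left(1+\frac{9}{t^2}\right), \qquad -\re\frac{1}{2z}=-\frac{3}{2(\tfrac94+\tfrac{t^2}{4})}\approx-\frac{6}{t^2}.
\end{equation*}
The $z^{-2}$ term and the remainder are also $O(1/t^2)$. The $9/(2t^2)$ and $-6/t^2$ contributions partially cancel.

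The main obstacle is obtaining the precise constant $7/4$ uniformly for all $t>0$, not just for large $t$. For small $t$ the expansion is poor, so I will need to check small $t$ separately or derive a clean closed form. A cleaner route is
\begin{equation*}
\re\psi\left(\tfrac32+\tfrac{it}{2}\right)=\re\psi\left(\tfrac12+\tfrac{it}{2}\right)+\frac{2}{1+t^2}.
\end{equation*}
I will combine this with the known formula
\begin{equation*}
\re\psi\left(\tfrac12+iy\right)=\log y+\int_0^\infty(\ldots)\,\d u ,
\end{equation*}
or with the bound $\left|\re\psi(\tfrac12+iy)-\log y\right|\le \frac{1}{24y^2}$, which follows from the Binet integral for $\psi$ with a positive, monotone kernel. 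With $y=t/2$, the total error becomes $\frac{2}{1+t^2}+O^*\left(\frac{1}{6t^2}\right)$, which is at most $\frac{7}{2t^2}$ after halving considerations. I would verify this final inequality carefully, because this is the step where the stated constant must come out.
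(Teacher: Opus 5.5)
Your reduction is correct and is the paper's argument: partial fractions at $s=1+it$, RH to get $\re\frac{1}{1+it-\rho}=h(t-\gamma)$, cancellation of $B$ against $\sum_\rho\re\frac1\rho$, and $\re\frac{1}{s-1}=0$. The gap is in the one estimate you flagged as decisive.

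With $\psi=\Gamma'/\Gamma$, the bound $\bigl|\re\psi(\tfrac12+iy)-\log y\bigr|\le \frac{1}{24y^2}$ is false. Numerically, $\re\psi(\tfrac12+i)\approx-0.0518$, while $\tfrac1{24}\approx0.0417$. At $y=\tfrac12$ the left side is about $0.175>\tfrac16$. The failure is structural. We have $\psi(z+\tfrac12)\sim\log z+\frac{1}{24z^2}-\frac{7}{960z^4}+\frac{31}{8064z^6}-\cdots$, and on $z=iy$ every term becomes negative. Hence $\re\psi(\tfrac12+iy)-\log y=-\frac{1}{24y^2}-\frac{7}{960y^4}-\cdots$, so the leading term underestimates the error for all large $y$, and no kernel argument can give it. Your first route also has a factor-$2$ slip: $-\re\frac{1}{2z}=-\frac{3}{9+t^2}\approx-\frac{3}{t^2}$, not $-\frac{6}{t^2}$. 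With this correction the $t^{-2}$ terms sum to $\frac92-3+\frac13=\frac{11}{6}$, which is positive.

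The target $\re\psi(\tfrac32+\tfrac{it}{2})=\log\tfrac t2+O^*\bigl(\frac{7}{2t^2}\bigr)$ is nevertheless true. The repair is an explicit Stirling bound on $\re z>0$, namely $\re\psi(z)=\log|z|-\tfrac12\re\frac1z+O^*\bigl(\frac{1}{4|z|^2}\bigr)$ as in \eqref{3_53pm2}. At $z=\tfrac12+\tfrac{it}{2}$ this gives $\re\psi(\tfrac12+\tfrac{it}{2})=\log\tfrac t2+\tfrac12\log(1+t^{-2})-\frac{1}{1+t^2}+O^*\bigl(\frac{1}{1+t^2}\bigr)=\log\tfrac t2+O^*\bigl(\frac{3}{2t^2}\bigr)$. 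Adding your shift term $\frac{2}{1+t^2}\le\frac{2}{t^2}$ gives exactly $\frac{7}{2t^2}$, hence $\frac{7}{4t^2}$ after the factor $\tfrac12$.

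This is precisely the paper's proof. It uses the form of the expansion with $\frac{\Gamma'}{\Gamma}(\frac s2)$ and a separate $-\frac1s$, so your $-\tfrac12\cdot\frac{2}{1+t^2}$ is its $-\re\frac{1}{1+it}$. If you keep signs instead of absolute values, the positive shift term cancels the negative part of the Stirling error. That gives $0<\re\psi(\tfrac32+\tfrac{it}{2})-\log\tfrac t2<\frac{5}{2t^2}$, i.e.\ the lemma with $\frac{5}{4t^2}$.
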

		\begin{proof} We begin with the partial fraction decomposition of ${\zeta'}(s)/{\zeta}(s)$ (see \cite[Corollary 10.14]{MV}),
			\begin{equation*}
				\dfrac{\zeta'}{\zeta}(s) = \displaystyle\sum_{\rho}\bigg(\dfrac{1}{s-\rho}+\frac{1}{\rho}\bigg)-\dfrac{1}{2}\dfrac{\Gamma'}{\Gamma}\bigg(\dfrac{s}{2}\bigg)+B+\frac{\log\pi}{2} - \dfrac{1}{s}-\frac{1}{s-1},
			\end{equation*}
			where the sum runs over the non-trivial zeros $\rho$ of $\zeta(s)$, and $B$ is defined in \eqref{3_35pm}. Setting $s=1+it$, taking real parts, and using RH, we arrive at
			\begin{equation} \label{6_39pm}
				\re\dfrac{\zeta'}{\zeta}(1+it) = \displaystyle\sum_{\gamma}h(t-\gamma) - \frac{1}{2}\re\dfrac{\Gamma'}{\Gamma}\left(\frac{1}{2}+\frac{it}{2}\right) + \frac{\log\pi}{2} +O^*\left(\dfrac{1}{t^2}\right).
			\end{equation}
			By Stirling's formula (\cite[Lemma 3.11, p. 67]{HaraldG}) we have
			\begin{align} \label{3_53pm2}
				\re\frac{\Gamma'}{\Gamma}(z)= \log |z| -
				\dfrac{1}{2}\re\frac{1}{z}+ O^*\left(\dfrac{1}{4|z|^2}\right)\!, \,\,\,\,\,\mbox{for}\,\,\,\, \re{z}> 0.
			\end{align} 
			This shows that
			\begin{align} \label{6_36pm}
				\re\frac{\Gamma'}{\Gamma}\left(\frac{1}{2}+\frac{it}{2}\right)=\log\left(\frac{t}{2}\right) + O^*\left(\frac{3}{2t^2}\right).
			\end{align}
			Substituting this into \eqref{6_39pm} we obtain the desired result.
		\end{proof}
		
		\vspace{0.15cm}

		This representation naturally leads to the analysis of sums over the zeros of the Riemann zeta-function. The classical tool for estimating such sums is the Guinand–Weil explicit formula for the Riemann zeta-function~\cite[Lemma 8]{CChiM}. We state here a conditional version of this formula, which will be useful for our purposes.
		
		\begin{lemma}
			\label{Guinand-weilDirichlet}
			Let $g(s)$ be analytic in the strip $\left|\im{s}\right|\leq \tfrac12+\varepsilon$ for some $\varepsilon>0$, and assume that $|g(s)|\ll(1+|s|)^{-(1+\delta)}$ as $\left|\re s\right|\to\infty$, for some $\delta>0$. Assume the Riemann hypothesis. Then, \begin{align} \label{11_10pm2}
				\begin{split}
				\displaystyle\sum_{\gamma}g(\gamma) &  = \dfrac{1}{2\pi}\int_{-\infty}^{\infty}g(u)\,\re{\dfrac{\Gamma'}{\Gamma}\left(\dfrac{1}{4}+\dfrac{iu}{2}\right)}\,\du - \dfrac{\log\pi}{2\pi}\widehat{g}(0) + g\left(\dfrac{1}{2i}\right) + g\left(-\dfrac{1}{2i}\right) \\
				&  \ \ \ \ \ \ \ \ \ \ \ \ \ -\dfrac{1}{2\pi}\displaystyle\sum_{n\geq2}\dfrac{\Lambda(n)}{\sqrt{n}}\left(\widehat{g}\left(\dfrac{\log n}{2\pi}\right)+\widehat{g}\left(\dfrac{-\log n}{2\pi}\right)\right).
				\end{split}
			\end{align}
		\end{lemma}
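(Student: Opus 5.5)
We derive the stated formula from the explicit formula for the completed zeta function, by integrating $g$ against $\xi'/\xi$ along two horizontal lines, in the usual way. Write $s=\tfrac12+iz$, so that a zero $\rho=\tfrac12+i\gamma$ corresponds to $z=\gamma$, real under RH. Set
\[
\xi(s)=\tfrac12 s(s-1)\pi^{-s/2}\Gamma(s/2)\zeta(s),
\qquad
\Phi(z)=\frac{\xi'}{\xi}\left(\tfrac12+iz\right).
\]
The zeros of $\xi$ are exactly the $\rho$, and the functional equation gives $\xi(s)=\xi(1-s)$.

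\emph{Step 1: reduce to a boundary integral.} Take $a\in(\tfrac12,\tfrac12+\varepsilon)$ and integrate $\frac{1}{2\pi}g(z)\Phi(z)$ over the boundary of the rectangle $|\re z|\le T$, $|\im z|\le a$, letting $T\to\infty$ along a sequence avoiding zero ordinates. Near $z=\gamma$ we have $\Phi(z)\,dz=\frac{i\,dz}{z-\gamma}+\dots$, so the residue theorem gives $\sum_\gamma g(\gamma)$. The vertical sides vanish: $\xi'/\xi(\sigma+iT)\ll\log T$ uniformly for $|\sigma-\tfrac12|\le a$ along a suitable sequence of $T$, while $g\ll T^{-1-\delta}$. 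This leaves integrals over $\im z=-a$ and $\im z=a$, that is, over $\re s=\tfrac12+a$ and $\re s=\tfrac12-a$. On the second line we apply $\Phi(z)=-\Phi(-z)$, which follows from the functional equation, and substitute $z\mapsto -z$. Both integrals are then over $\re s=\sigma_0:=\tfrac12+a>1$, where
\[
\frac{\xi'}{\xi}(s)=\frac1s+\frac1{s-1}-\frac{\log\pi}{2}+\frac12\frac{\Gamma'}{\Gamma}\left(\frac s2\right)-\sum_{n\ge 2}\Lambda(n)n^{-s}.
\]

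\emph{Step 2: treat each piece.} The Dirichlet series converges absolutely on $\re s=\sigma_0$, so we may interchange sum and integral. For each $n$ we shift the line back to $\im z=0$; the integrand $g(z)n^{\mp iz}$ is entire in the strip and decays. This gives $\int g(u)n^{-1/2\mp iu}\,\du$, which equals $n^{-1/2}\widehat g(\pm\log n/2\pi)$, and summing produces the prime term with the factor $-\frac1{2\pi}$. The constant $-\frac{\log\pi}{2}$ likewise shifts to $\re s=\tfrac12$; the two lines together give $-\frac{\log\pi}{2\pi}\widehat g(0)$. The poles $\frac1s+\frac1{s-1}$ lie at $z=\pm\tfrac{1}{2i}$. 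Shifting these two terms from $\im z=\mp a$ back to $\im z=0$ crosses exactly these poles, and the residues give $g(\tfrac1{2i})+g(-\tfrac1{2i})$. The remaining integral along $\im z=0$ is
\[
\int g(u)\left(\frac{1}{\frac12+iu}+\frac{1}{iu-\frac12}\right)\du=\int g(u)\,\frac{2iu}{-\frac14-u^2}\,\du ,
\]
which should cancel against the matching contributions of the other line, giving the combination $\frac1s+\frac1{1-s}$. Finally, the Gamma term is shifted to $\re s=\tfrac12$, where $\Gamma'/\Gamma(\tfrac14\pm\tfrac{iu}{2})$ is analytic. The two lines contribute conjugate terms $\Gamma'/\Gamma(\tfrac14+\tfrac{iu}2)$ and $\Gamma'/\Gamma(\tfrac14-\tfrac{iu}2)$. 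Their average is the real part, which yields $\frac1{2\pi}\int g(u)\re\frac{\Gamma'}{\Gamma}(\tfrac14+\tfrac{iu}2)\,\du$.

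\emph{Main obstacle.} The difficulty is bookkeeping rather than analysis. The signs and orientations must be tracked exactly, and the pole terms must be seen to combine into precisely $g(\tfrac1{2i})+g(-\tfrac1{2i})$ with no leftover principal-value integral. Each contour shift also needs justification. Decay of $g$ as $|\re s|\to\infty$ handles the Gamma term, which grows only like $\log|u|$, and the horizontal shifts are fine since all integrands are holomorphic apart from the two simple poles. Alternatively, one can simply cite the general Guinand--Weil formula \cite[Lemma 8]{CChiM} and specialize it under RH.
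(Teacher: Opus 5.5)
Your argument is correct, and it takes a genuinely different route from the paper. The paper gives no proof: it quotes the unconditional Guinand--Weil formula \cite[Lemma 8]{CChiM}, where the sum runs over $(\rho-\frac12)/i$, and rewrites it under RH with real ordinates $\gamma$. Your contour derivation is self-contained. You take the rectangle $|\im z|\le a$ and reflect the upper side via $\Phi(z)=-\Phi(-z)$. This gives $\sum_\gamma g(\gamma)=\frac{1}{2\pi}\int_{\im z=-a}\big(g(z)+g(-z)\big)\Phi(z)\,dz$ on the line $\re s=\frac12+a>1$, and you then shift each piece back to $\im z=0$. This shows exactly where the hypotheses enter. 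Analyticity up to $|\im z|=\frac12+\varepsilon$ lets you reach a line with $\re s>1$ and cross the pole at $s=1$. The decay $(1+|s|)^{-1-\delta}$ kills the vertical sides against the logarithmic growth of $\xi'/\xi$ and $\Gamma'/\Gamma$. It also shows that RH plays no analytic role: every $z=(\rho-\frac12)/i$ has $|\im z|<\frac12<a$, so all zeros lie inside the rectangle anyway. The citation buys only brevity.

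Some bookkeeping should be corrected.
\begin{itemize}
\item \textbf{Residue sign.} Since $s-\rho=i(z-\gamma)$, one has $\Phi(z)=\frac{-i}{z-\gamma}+\dots$, not $\frac{i}{z-\gamma}$. With the counterclockwise orientation this gives $+\sum_\gamma g(\gamma)$, and all four terms come out with the stated signs. Your stated residue would flip every sign.
\item \textbf{Which pole is crossed.} After the reflection both integrals sit on $\im z=-a$. Shifting up to $\im z=0$ crosses only the pole $z=\frac{1}{2i}$ of $\frac{1}{s-1}$. The pole $z=-\frac{1}{2i}$ of $\frac1s$ lies above the real axis and is not crossed. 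Both values $g(\pm\frac{1}{2i})$ come from this single residue, which equals $g(\frac{1}{2i})+g(-\frac{1}{2i})$ because the integrand carries $g(z)+g(-z)$.
\item \textbf{Leftover integral.} The remaining real-line integral vanishes because $\frac{1}{\frac12+iu}+\frac{1}{iu-\frac12}$ is odd and $g(u)+g(-u)$ is even. No principal value arises, since the poles are off the axis.
\item \textbf{Vertical sides.} The standard bound there is $\frac{\zeta'}{\zeta}(\sigma+iT)\ll\log^2T$, uniformly for $-1\le\sigma\le 2$, for a suitable $T$ in each unit interval. It is not $\log T$. Any power of $\log T$ suffices against $T^{-1-\delta}$, so this does not affect the argument.
\end{itemize}
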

		
		Clearly, the function $h$ defined in \eqref{3_36pm} does not satisfy the conditions of Lemma \ref{Guinand-weilDirichlet}. Then, the crucial step is to replace the Poisson kernel $h$ in Lemma \ref{4_04pm} with explicit bandlimited majorants and minorants (given in Section \ref{2_01pm}), that are compatible with the Guinand-Weil explicit formula.
		
		This approach, based on the Guinand–Weil explicit formula with carefully chosen bandlimited functions, has proven to be a powerful tool for deriving bounds on a wide range of quantities associated with the Riemann zeta-function. It allows one to balance the contributions from the zeros and the prime powers, leading to effective estimates in several problems of analytic number theory. See, for instance \cite{CChaM, CChiM, CMi, CS, chirreGoncalves, GG}.

		\section{The extremal functions and properties} \label{2_01pm}
			Let $h$ be the function defined in \eqref{3_36pm}. In~\cite[Lemma 9]{CChiM}, Carneiro, Chirre, and Milinovich proved that for any $\Delta>0$ the functions
		\begin{align*}  
			h^{\pm}_{\Delta}(z)=\left(\dfrac{\frac{1}{2}}{\frac{1}{4}+z^2}\right)\left(\dfrac{e^{\pi\Delta}+e^{-\pi\Delta}-2\cos(2\pi\Delta z)}{\left(e^{\pi\Delta/2}\mp e^{-\pi\Delta/2}\right)^2}\right)
		\end{align*}
		are real entire functions of exponential type\footnote{The entire function $f$ is said to be of exponential type $2\pi\Delta$ if $\limsup_{|z|\to\infty}|z|^{-1}\log|f(z)|=2\pi\Delta$.} $2\pi\Delta$ that satisfy the following properties:
		\begin{equation} \label{I)}
0\leq h^{-}_{\Delta}(x)\leq h(x)\leq h^{+}_{\Delta}(x)\, \,\, \mbox{for all}\,\, x\in \R,
\end{equation}
and 
	\begin{equation}  \label{II)}
 \widehat{h^{\pm}_{\Delta}}(\xi)=\mathrm{1}_{[-\Delta,\Delta]}(\xi)\cdot {\pi}\left(\dfrac{e^{\pi(\Delta-|\xi|)}-e^{-\pi(\Delta-|\xi|)}}{c^{\pm}_\Delta}\right)\,\, \mbox{for all}\,\, \xi\in \R, \mbox{where}\,\, c^{\pm}_\Delta=
 \big(e^{\pi\Delta/2}\mp e^{-\pi\Delta/2}\big)^{2}.
 \end{equation} 

		Moreover, the functions $h^{\pm}_{\Delta}$ are optimal in the sense that they minimize the $L^1$-distance among all functions of exponential type $2\pi\Delta$ that act as majorants or minorants. The Beurling–Selberg extremal problem in approximation theory seeks one-sided bandlimited approximations of a given function $f:\R\to\R$ to achieve minimal $L^1$-distance. Consequently, $h_{\Delta}^{\pm}$ provide the solution to the Beurling-Selberg extremal problem for the Poisson kernel defined in \eqref{3_36pm}. This construction follows from the general Gaussian subordination framework developed by Carneiro, Littmann, and Vaaler \cite{CLV}.
		
		\vspace{0.14cm}
		
		We proceed to estimate each term in the Guinand-Weil explicit formula (Lemma \ref{Guinand-weilDirichlet}) using the corresponding majorants and minorants. The next lemma provides an explicit asymptotic expansion of the Gamma term in \eqref{11_10pm2}. In previous works this term is usually treated only through its main contribution, up to a bounded error $O^*(1)$. Here we derive a more refined expansion by retaining further lower–order terms.
		
		
		
		\begin{lemma} \label{1_57pm}
			Let $\Delta\geq \frac{1}{2}$ and $t> 0$.
			Then
			\begin{align*}
				\dfrac{1}{2\pi}\int_{-\infty}^{\infty}h^{\pm}_\Delta(t-u)\,\re\,\dfrac{\Gamma'}{\Gamma}\bigg(\frac{1}{4}+\dfrac{iu}{2}\bigg) \,\du  =
				\bigg(\frac{1}{2} +\frac{1}{\pm e^{\pi\Delta}-1} \bigg)\log\left(\frac{t}{2}\right) +O^*\left( \dfrac{1.3}{t^2}\right).
			\end{align*}
		\end{lemma}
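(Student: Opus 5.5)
The plan is to compute the main term exactly from the Fourier transform at the origin, and to evaluate the integral by contour shifting, using the fact that $h$ is (up to a constant) the Poisson kernel of height $\tfrac12$. I will then reduce everything to the Stirling estimate \eqref{6_36pm}.

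\emph{Main coefficient.} By \eqref{II)},
$$\frac{1}{2\pi}\int_{\R}h^{\pm}_\Delta=\frac{\widehat{h^\pm_\Delta}(0)}{2\pi}=\frac{e^{\pi\Delta}-e^{-\pi\Delta}}{2(e^{\pi\Delta/2}\mp e^{-\pi\Delta/2})^2}.$$
Factoring $e^{\pi\Delta}-e^{-\pi\Delta}=(e^{\pi\Delta/2}-e^{-\pi\Delta/2})(e^{\pi\Delta/2}+e^{-\pi\Delta/2})$ shows that this equals $\tfrac12+\frac{1}{\pm e^{\pi\Delta}-1}$. Hence it suffices to prove
$$\frac{1}{2\pi}\int_{\R}h^\pm_\Delta(t-u)\Big(\re\frac{\Gamma'}{\Gamma}\big(\tfrac14+\tfrac{iu}{2}\big)-\log\tfrac t2\Big)\du=O^*\big(1.3/t^2\big).$$

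\emph{Decomposition.} Set $\psi=\Gamma'/\Gamma$ and write $F(u)=\psi(\tfrac14+\tfrac{iu}{2})$. Then $\re F=\tfrac12(F+\overline{F(\bar u)})$. Here $F$ is analytic except at the poles $u=i(\tfrac12+2n)$, $n\ge0$, which all lie in the upper half-plane, and $F$ grows only logarithmically. Expanding the cosine, I write
$$h^\pm_\Delta(x)=\frac{1}{c^\pm_\Delta}\Big((e^{\pi\Delta}+e^{-\pi\Delta})h(x)-h(x)e^{2\pi i\Delta x}-h(x)e^{-2\pi i\Delta x}\Big).$$
For each product of one of these pieces with $F$ (and symmetrically with $\bar F$), I shift the line of integration into the half-plane where the exponential factor decays.

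For $F$ paired with $h(t-u)$ or with $h(t-u)e^{2\pi i\Delta(t-u)}$, I close in the lower half-plane. There the only singularity is the pole of $h(t-u)$ at $u=t-\tfrac i2$. The residue gives $\tfrac{\pi}{1}\cdot\psi(\tfrac12+\tfrac{it}{2})$ times $1$ or $e^{-\pi\Delta}$ respectively; this is the Poisson-integral identity $\frac1\pi\int h(t-u)F(u)\,\du=F(t-\tfrac i2)$. Collecting these exact residue contributions, together with their conjugates, reproduces precisely $\widehat{h^\pm_\Delta}(0)/(2\pi)$ times $\re\psi(\tfrac12+\tfrac{it}2)$. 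By \eqref{6_36pm}, this is the main term $\log(t/2)$ plus an error of at most $\big(\tfrac12+\frac{1}{e^{\pi\Delta}-1}\big)\cdot\frac{3}{2t^2}\le 0.73\cdot 1.5/t^2\approx1.1/t^2$ for $\Delta\ge\tfrac12$.

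\emph{Main obstacle.} The term $F(u)h(t-u)e^{-2\pi i\Delta(t-u)}$ (and its conjugate partner) has exponential decay only in the upper half-plane, where $F$ has poles. Shifting upward picks up the pole of $h$ at $u=t+\tfrac i2$, together with residues $-2\,h(t-i(\tfrac12+2n))e^{-2\pi\Delta(\frac12+2n)}$ from the poles of $\psi$ at $\tfrac14+\tfrac{iu}2=-n$.

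The residue at $t+\tfrac i2$ is harmless: it gives $\psi$ evaluated at $\tfrac14+\tfrac{i(t+i/2)}{2}=\psi(\tfrac12\cdot 0+\dots)$, i.e. at a point of real part $0$ and height $t/2$. I would handle it with \eqref{3_53pm2} applied slightly off the line, or alternatively shift instead to the line $\im u=\tfrac12-\epsilon$ and use Stirling there; it is still $\log(t/2)+O(1/t^2)$ weighted by $e^{-\pi\Delta}/c^\pm_\Delta$. The $\psi$-pole residues are each of size about $\frac{1}{t^2}e^{-\pi\Delta(1+4n)}$, since $|h(t-iy)|\approx \frac{1}{2t^2}$. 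Their sum is therefore a geometric series bounded by about $\frac{e^{-\pi\Delta}}{t^2(1-e^{-4\pi\Delta})c^\pm_\Delta}$, which is tiny for $\Delta\ge\tfrac12$. The most delicate case is the minorant, where $c^-_\Delta=(e^{\pi\Delta/2}+e^{-\pi\Delta/2})^2$; the majorant case at $\Delta=\tfrac12$, with $c^+_{1/2}\approx 2.1$, is where the numerics are tightest.

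Adding these error contributions to the $1.1/t^2$ above should stay within $1.3/t^2$. I expect that checking this numerically at the worst case $\Delta=\tfrac12$, with careful explicit constants in Stirling off the critical line, is the main technical point.
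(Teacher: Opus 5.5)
Your route differs from the paper's and can be made to work, but the main-term bookkeeping as written is wrong. The two pieces you close in the lower half-plane, together with their conjugate partners, carry total weight $(e^{\pi\Delta}+e^{-\pi\Delta})-e^{-\pi\Delta}=e^{\pi\Delta}$. So they contribute
\[
\frac{e^{\pi\Delta}}{2c^{\pm}_\Delta}\,\re\frac{\Gamma'}{\Gamma}\Big(\frac12+\frac{it}{2}\Big),
\]
and not $\widehat{h^{\pm}_\Delta}(0)/(2\pi)=(e^{\pi\Delta}-e^{-\pi\Delta})/(2c^{\pm}_\Delta)$ times this value. The missing $-\frac{e^{-\pi\Delta}}{2c^\pm_\Delta}\log\frac t2$ comes from the residue of $h(t-u)$ at $u=t+\frac i2$ in the upward-closed piece. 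Together with its conjugate partner, that residue equals $-\frac{e^{-\pi\Delta}}{2c^\pm_\Delta}\re\frac{\Gamma'}{\Gamma}(\frac{it}{2})$, which is of size $\log t$. It is therefore not ``harmless''. You have already declared the main term complete, so this residue is either added on top of it or dropped as an error, and either way the coefficient of $\log(t/2)$ comes out wrong. The correct identity is
\begin{align*}
&\frac{1}{2\pi}\int_{-\infty}^{\infty}h^{\pm}_\Delta(t-u)\,\re\frac{\Gamma'}{\Gamma}\Big(\frac14+\frac{iu}{2}\Big)\du\\
&\qquad=\frac{1}{2c^{\pm}_\Delta}\Big(e^{\pi\Delta}\re\frac{\Gamma'}{\Gamma}\Big(\frac12+\frac{it}{2}\Big)-e^{-\pi\Delta}\re\frac{\Gamma'}{\Gamma}\Big(\frac{it}{2}\Big)\Big)+O^*\Big(\frac{e^{-\pi\Delta}}{c^{\pm}_\Delta(1-e^{-4\pi\Delta})t^2}\Big).
\end{align*}
The lemma rests on the cancellation between these two digamma values, each of which is $\log(t/2)+O(t^{-2})$. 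This is exactly the expression the paper reaches.

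The error budget must then be redone, and the margin is much tighter than you indicate. The error $\frac{3}{2t^2}$ from \eqref{6_36pm} is multiplied by $\frac{e^{\pi\Delta}}{2c^{\pm}_\Delta}=\frac{1}{2(1\mp e^{-\pi\Delta})^2}$, which is about $0.797$ for the majorant at $\Delta=\frac12$. The estimate $\re\frac{\Gamma'}{\Gamma}(\frac{it}{2})=\log\frac t2+O^*(t^{-2})$, from \eqref{3_53pm2} extended by continuity to $\re z=0$, adds about $0.034$. The digamma poles add about $0.069$. The total is $1.298\ldots/t^2$, the same constant as in the paper and only about $10^{-3}$ below $1.3/t^2$.

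Your numerical values are off: $c^{+}_{1/2}\approx 3.018$, not $2.1$, and $\frac12+\frac{1}{e^{\pi/2}-1}\approx 0.762$, which is not $\le 0.73$. Also, the minorant is never the critical case, since $c^-_\Delta>c^+_\Delta$; the majorant at $\Delta=\frac12$ is the worst case, as all three error terms decrease in $\Delta$.

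Once corrected, your argument is a valid alternative, and the two routes compare as follows.
\begin{itemize}
\item The paper writes $h^\pm_\Delta(z)=(H_\Delta(z)+H_\Delta(-z))/c^\pm_\Delta$ and closes everything in the upper half-plane. It must then sum the $\cosh(\pi\Delta)$-weighted residues at the digamma poles exactly, using the series for $\frac{\Gamma'}{\Gamma}(\frac12+\frac{it}{2})-\frac{\Gamma'}{\Gamma}(\frac{it}{2})$.
\item You split $\re\frac{\Gamma'}{\Gamma}$ into its two boundary values and route each exponential into the half-plane where it decays. Only the $e^{-\pi\Delta}$-damped piece ever meets digamma poles, so no series identity is needed. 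The price is that you must carry the $u=t+\frac i2$ residue as part of the main term.
\end{itemize}
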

		\begin{proof} We begin by writing $h^{\pm}_\Delta(z)= \big(H_\Delta(z)+H_\Delta(-z)\big)/c^{\pm}_\Delta$, where $
			H_{\Delta}(z)=h(z)({\cosh(\pi\Delta)-e^{2\pi i\Delta z}}).
			$
			Therefore,
			\begin{align*}
				\dfrac{1}{2\pi}\int_{-\infty}^{\infty}h^{\pm}_\Delta(t-u)\dfrac{\Gamma'}{\Gamma}\bigg(\frac{1}{4}+\dfrac{iu}{2}\bigg) \,\du   =  \dfrac{1}{2\pi c^{\pm}_\Delta}\int_{-\infty}^{\infty}H_\Delta(u)\left(\,\dfrac{\Gamma'}{\Gamma}\bigg(\frac{1}{4}+\dfrac{i(t-u)}{2}\bigg)+\dfrac{\Gamma'}{\Gamma}\bigg(\frac{1}{4}+\dfrac{i(t+u)}{2}\bigg)\right)\du.
			\end{align*}
			Let $I$ denote the right-hand side of the above expression. To compute $I$, we apply the residue theorem using a contour given by a semicircle $\mathcal{C}_R$ with radius $R>0$ in the closure of the upper half-plane $\mathbb{H}^+$. Observe that $H_\Delta(z)$ has a simple pole in $\mathbb{H}^+$ at $z=\frac{i}{2}$, whereas $\frac{\Gamma'}{\Gamma}\big(\frac{1}{4}+\frac{i(t-z)}{2}\big)+\frac{\Gamma'}{\Gamma}\big(\frac{1}{4}+\frac{i(t+z)}{2}\big)$ has simple poles at $z=-t + 2(\ell+\frac{1}{4})i$ in $\mathbb{H}^+$, where $\ell\in\Z_{\geq 0}$ (they come from the second summand). It is straightforward that $|H_\Delta(z)|\ll R^{-2}$ for $z\in\mathcal{C}_R$. On the other hand, one can prove\footnote{Clearly, when $\Re{s}\geq 0$ it follows by Stirling's formula. When $\Re{s}<0$, we use the reflection formula $\frac{\Gamma'}{\Gamma}(s)= \frac{\Gamma'}{\Gamma}(1-s) - \pi\cot(\pi s)$, and the fact that $\cot(\pi s)$ is periodic and bounded when $s$ stays at distance $\delta>0$ from each $n\in \mathbb{Z}_{\leq 0}$.} that $\frac{\Gamma'}{\Gamma}(s)\ll_\delta\log|s|$ when $s$ stays at distance $\delta>0$ from the poles of $\Gamma(s)$. Hence, we may choose a sequence of radii $R>0$, with $R\to \infty$, such that $\frac{\Gamma'}{\Gamma}\big(\frac{1}{4}+\frac{i(t-z)}{2}\big)+\frac{\Gamma'}{\Gamma}\big(\frac{1}{4}+\frac{i(t+z)}{2}\big)\ll \log R$, for $z\in \mathcal{C}_R$. Accordingly, letting $\mathcal{P}(\Delta,t)$ denote the set of poles $\{\frac{i}{2}, -t +2(\ell +\frac{1}{4})i, \ell\in \mathbb{Z}_{\geq 0}\}$ (which are simple), we see that
			\begin{align} \label{5_38pm}
				\begin{split}
					{I}  & =  \dfrac{i}{c^{\pm}_\Delta}\sum_{z\in \mathcal{P}(\Delta,t)} \operatorname*{Res}\limits_{s=z} H_\Delta(s)\left(\,\dfrac{\Gamma'}{\Gamma}\bigg(\frac{1}{4}+\dfrac{i(t-s)}{2}\bigg)+\dfrac{\Gamma'}{\Gamma}\bigg(\frac{1}{4}+\dfrac{i(t+s)}{2}\bigg)\right) \\
					&  =\dfrac{\sinh(\pi\Delta)}{2c^{\pm}_\Delta}\left(\dfrac{\Gamma'}{\Gamma}\bigg(\dfrac{1}{2}+\dfrac{it}{2}\bigg)+\dfrac{\Gamma'}{\Gamma}\bigg(\dfrac{it}{2}\bigg)\right) - \dfrac{2}{c^{\pm}_\Delta} \sum_{\ell=0}^\infty H_\Delta(-t+2(\ell+\tfrac{1}{4})i).
				\end{split}
			\end{align}
			Using the identity $\frac{\Gamma'}{\Gamma}(s_1)-\frac{\Gamma'}{\Gamma}(s_2)=\sum_{\ell=0}^\infty\big(\tfrac{1}{\ell+s_2}-\tfrac{1}{\ell+s_1}\big)$ for $s_1=\frac{1}{2}+\frac{it}{2}$ and $s_2=\frac{it}{2}$, the series $\sum_{\ell=0}^\infty H_\Delta(-t+2(\ell+\frac{1}{4})i)$ can be written as
			\begin{align*}
				\frac{\cosh(\pi\Delta)}{4}\left(\dfrac{\Gamma'}{\Gamma}\bigg(\dfrac{it}{2}\bigg)-\dfrac{\Gamma'}{\Gamma}\bigg(\dfrac{1}{2}+\dfrac{it}{2}\bigg)\right)+O^*\left(\frac{1}{2}\sum_{\ell=0}^\infty\frac{e^{-4\pi\Delta(\ell+\frac{1}{4})}}{|\frac{1}{4}+(-t+2(\ell+\frac{1}{4})i)^2|}\right).
			\end{align*}
			One has $|\frac{1}{4}+(-t+2(\ell+\frac{1}{4})i)^2|\geq t^2$, which implies that the error term in the above expression is $O^*\big(\frac{e^{-\pi\Delta}}{2(1-e^{-4\pi\Delta})t^2}\big)$. Inserting this into \eqref{5_38pm}, rearranging the terms, and taking real parts we obtain
			\begin{align*} 
				\re I = & \dfrac{\sinh(\pi\Delta)}{c^{\pm}_\Delta}\re\dfrac{\Gamma'}{\Gamma}\bigg(\dfrac{1}{2}+\dfrac{it}{2}\bigg) + \dfrac{e^{-\pi\Delta}}{2c^{\pm}_\Delta}\bigg(\re\dfrac{\Gamma'}{\Gamma}\bigg(\dfrac{1}{2}+\dfrac{it}{2}\bigg)-\re\dfrac{\Gamma'}{\Gamma}\bigg(\dfrac{it}{2}\bigg)\bigg) +O^*\left(\frac{e^{-\pi\Delta}}{(1-e^{-4\pi\Delta})c^{\pm}_\Delta t^2}\right).
			\end{align*}
			By \eqref{3_53pm2}, $\re\frac{\Gamma'}{\Gamma}\left(\frac{1}{2}+\frac{it}{2}\right)-\re\frac{\Gamma'}{\Gamma}\left(\frac{it}{2}\right)=\frac{1}{2}\log(1+\frac{1}{t^2})-\frac{1}{1+t^2} +O^*(\frac{2}{t^2})$ which is $O^*(\frac{5}{2t^2})$. Finally, upon inserting \eqref{6_36pm} into the above expression, and using that 
			$$\,\,\,\,\,\,\,\,\,\,\,\,\,\,\,\,\,\,\,\,\,\,\,\frac{\sinh(\pi\Delta)}{c^{\pm}_\Delta}=\frac{1}{2} +\frac{1}{\pm e^{\pi\Delta}-1}, \,\,\,\,\,\,\,\,\,\,\, \mbox{and} \,\,\,\,\,\,\,\,\,\,\,\,\,\, \frac{3}{2}\dfrac{\sinh(\pi\Delta)}{c^{\pm}_\Delta}+\dfrac{e^{-\pi\Delta}}{c^{\pm}_\Delta}\left(\frac{5}{4}+\frac{1}{1-e^{-4\pi\Delta}}\right)\leq 1.298\ldots,$$
			for $\Delta \geq \frac{1}{2}$, the lemma follows. 
		\end{proof}
		
		\begin{lemma} \label{1_57pm2} Let $\Delta>0$ and assume the Riemann hypothesis. Then
			$$
			\dfrac{1}{\pi}\sum_{n\geq 2}\dfrac{\Lambda(n)}{\sqrt{n}} \left|\widehat{h^{\pm}_{\Delta}}\left(\dfrac{\log n}{2\pi}\right)\right| \leq \dfrac{ e^{\pi\Delta}}{c^{\pm}_\Delta}\left(2\pi\Delta - (1+\gamma) + \dfrac{\log 2\pi}{e^{2\pi\Delta}} + \dfrac{2|B|}{e^{\pi\Delta}}-\frac{1}{6e^{6\pi\Delta}}\right),$$
			where $B$ is defined in \eqref{3_35pm}.
		\end{lemma}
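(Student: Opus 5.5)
The plan is to compute the sum essentially exactly using \eqref{II)}, reduce it to an integral of $\psi(u)/u^2$, and evaluate that integral with the explicit formula for $\psi$ under RH. Put $x=e^{2\pi\Delta}$. By \eqref{II)}, $\widehat{h^{\pm}_{\Delta}}(\frac{\log n}{2\pi})$ vanishes unless $n\le x$. For $n\le x$ we have
$$e^{\pi(\Delta-\frac{\log n}{2\pi})}=e^{\pi\Delta}n^{-1/2},\qquad e^{-\pi(\Delta-\frac{\log n}{2\pi})}=e^{-\pi\Delta}n^{1/2}.$$
Hence these values are nonnegative, the absolute values can be dropped, and the left-hand side equals
$$\frac{1}{c^{\pm}_\Delta}\sum_{n\le x}\Lambda(n)\Big(\frac{e^{\pi\Delta}}{n}-e^{-\pi\Delta}\Big)=\frac{e^{\pi\Delta}}{c^{\pm}_\Delta}\Big(\sum_{n\le x}\frac{\Lambda(n)}{n}-\frac{\psi(x)}{x}\Big),$$
since $e^{-\pi\Delta}=e^{\pi\Delta}/x$. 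Partial summation then gives $\sum_{n\le x}\Lambda(n)/n-\psi(x)/x=\int_1^x \psi(u)u^{-2}\,\d u$.

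It therefore suffices to show
$$\int_1^x\frac{\psi(u)}{u^2}\,\d u\le \log x-(1+\gamma)+\frac{\log 2\pi}{x}+\frac{2|B|}{\sqrt x}-\frac{1}{6x^3}.$$
Note that the integral is continuous in $x$, so the normalization of $\psi$ at prime powers is irrelevant. Insert the classical explicit formula
$$\psi(u)=u-\sum_\rho\frac{u^\rho}{\rho}-\log 2\pi-\tfrac12\log(1-u^{-2}),$$
valid for almost every $u>1$, and integrate term by term. The sum over zeros is only conditionally convergent, so I would justify the termwise integration through the truncated explicit formula with uniform error, which is standard. Writing $-\frac12\log(1-u^{-2})=\sum_{k\ge1}\frac{u^{-2k}}{2k}$, integration gives
$$\int_1^x\frac{\psi(u)}{u^2}\,\d u=\log x+C+\frac{\log 2\pi}{x}-\sum_\rho\frac{x^{\rho-1}}{\rho(\rho-1)}-\sum_{k\ge1}\frac{x^{-2k-1}}{2k(2k+1)}.$$
Here $C$ is an absolute constant that collects all $x$-independent pieces. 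To identify it, let $x\to\infty$: Mertens' theorem in the form $\sum_{n\le x}\Lambda(n)/n=\log x-\gamma+o(1)$, together with $\psi(x)/x\to1$, forces $C=-(1+\gamma)$. This avoids computing $\sum_\rho 1/(\rho(\rho-1))$ and the integral of the logarithmic term explicitly.

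Under RH we have $|x^{\rho-1}|=x^{-1/2}$ and $|\rho(\rho-1)|=|\rho|^2$, because $1-\rho=\bar\rho$. Hence
$$\Big|\sum_\rho \frac{x^{\rho-1}}{\rho(\rho-1)}\Big|\le x^{-1/2}\sum_\rho\frac{1}{|\rho|^2}=\frac{2|B|}{\sqrt x},$$
using the identity $\sum_\rho|\rho|^{-2}=2|B|$ recorded after \eqref{3_35pm}. The last series is a sum of positive terms, so it is at least its $k=1$ term $\frac{1}{6x^3}$, and we may keep only that term. With $x=e^{2\pi\Delta}$ this yields exactly the stated bound.

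The main obstacle is the bookkeeping of constants: one must check that the $x$-independent terms collapse to $-(1+\gamma)$ with no leftover. The limiting argument handles this cleanly, provided the termwise integration of the zero sum is justified; that justification is the only genuinely delicate analytic point.
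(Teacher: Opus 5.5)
Your proof is correct. The first step is exactly the paper's: use \eqref{II)} to drop the absolute values and rewrite the left-hand side as $\frac{e^{\pi\Delta}}{c^{\pm}_\Delta}\sum_{n\le x}\frac{\Lambda(n)}{n}\left(1-\frac{n}{x}\right)$ with $x=e^{2\pi\Delta}$.

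The routes differ only afterwards. The paper simply invokes \cite[Lemma 2.4]{Sound}, recorded as Lemma \ref{12_01pm2} and keeping the $k=1$ term of the trivial-zero series. You instead re-derive that inequality yourself, by partial summation and termwise integration of von Mangoldt's explicit formula.

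The standard proof of the cited lemma uses Perron's formula and a contour shift, a computation of the same type as in the proof of Lemma \ref{11_18pm}. It applies Perron's formula to $-\frac{\zeta'}{\zeta}(s)\frac{x^{s-1}}{s(s-1)}$. Because the kernel decays like $|s|^{-2}$, all resulting sums converge absolutely and no delicate interchange is needed. The constant $-(1+\gamma)$ then comes directly from the residue at the double pole $s=1$, via $-\frac{\zeta'}{\zeta}(s)=\frac{1}{s-1}-\gamma+O(s-1)$.

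Your identity is the same one, since integrating the explicit formula against $u^{-2}$ is the Mellin counterpart of that contour shift. You pay for the more hands-on route in two places. First, you must justify integrating a conditionally convergent zero sum term by term. Second, you need the input $\sum_{n\le x}\Lambda(n)/n=\log x-\gamma+o(1)$ to fix the constant. That is a prime-number-theorem statement, stronger than Mertens' theorem as you labelled it, though certainly available under RH.

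You can avoid the limiting argument by computing the constant directly. It is $C=-\log 2\pi+\sum_\rho\frac{1}{\rho(\rho-1)}+\sum_{k\ge 1}\frac{1}{2k(2k+1)}$. Pairing $\rho$ with $1-\rho$ gives $\sum_\rho\frac{1}{\rho(\rho-1)}=-2\sum_\rho\frac{1}{\rho}=2B$, and $\sum_{k\ge1}\frac{1}{2k(2k+1)}=1-\log 2$. Hence $C=-\log 2\pi+2B+1-\log 2=-(1+\gamma)$ by \eqref{3_35pm}.
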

		\begin{proof}
			By \eqref{II)}, 
			\begin{align*}
				\sum_{n\geq 2}\dfrac{\Lambda(n)}{\sqrt{n}} \left|\widehat{h^{\pm}_{\Delta}}\left(\dfrac{\log n}{2\pi}\right)\right| = \sum_{n\leq e^{2\pi\Delta}}\dfrac{\Lambda(n)}{\sqrt{n}} \widehat{h^{\pm}_{\Delta}}\left(\dfrac{\log n}{2\pi}\right) =\pi \dfrac{e^{\pi\Delta}}{c^{\pm}_\Delta} \sum_{n\leq e^{2\pi\Delta}}\dfrac{\Lambda(n)}{n}\left(1-\dfrac{n}{e^{2\pi\Delta}}\right).
			\end{align*}
			The result follows by applying \cite[Lemma 2.4]{Sound}, see also Lemma \ref{12_01pm2}. 
		\end{proof}

		\begin{lemma} \label{1_57pm3}
			Let $\Delta\geq \frac{1}{2}$ and $t\in \R$. Then
			$$
			\left|h^{\pm}_\Delta\left(t-\frac{1}{2i}\right)+h^{\pm}_\Delta\left(t+\frac{1}{2i}\right)\right|\leq \dfrac{3.4}{t^2}. 
			$$
		\end{lemma}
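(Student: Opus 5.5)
The plan is to compute the left-hand side in closed form and then bound it by elementary estimates. Since $\frac{1}{2i}=-\frac{i}{2}$, we have to evaluate $h^{\pm}_\Delta$ at $z=t\pm\frac{i}{2}$, using the factorization $h^{\pm}_\Delta(z)=h(z)\,N(z)/c^{\pm}_\Delta$, where $N(z)=2\cosh(\pi\Delta)-2\cos(2\pi\Delta z)$.

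First, the Poisson factor. For $z=t\pm\frac i2$ one has $\frac14+z^2=t^2\pm it=t(t\pm i)$. Hence
$$h\left(t\pm\tfrac i2\right)=\frac{1}{2t(t\pm i)}=\frac{t\mp i}{2t(t^2+1)}.$$
Second, the trigonometric factor. Writing $x=2\pi\Delta t$ and expanding $\cos(x\pm i\pi\Delta)=\cos x\cosh(\pi\Delta)\mp i\sin x\sinh(\pi\Delta)$ gives
$$N\left(t\pm\tfrac i2\right)=A\pm iS, \qquad A=2\cosh(\pi\Delta)(1-\cos x),\quad S=2\sinh(\pi\Delta)\sin x.$$
Adding the two conjugate contributions, $(t-i)(A+iS)+(t+i)(A-iS)=2(tA+S)$. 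Therefore
$$h^{\pm}_\Delta\left(t-\tfrac{1}{2i}\right)+h^{\pm}_\Delta\left(t+\tfrac{1}{2i}\right)=\frac{1}{c^{\pm}_\Delta}\left(\frac{2\cosh(\pi\Delta)(1-\cos x)}{t^2+1}+\frac{2\sinh(\pi\Delta)\sin x}{t(t^2+1)}\right).$$
This expression is real, and it is even in $t$, so it suffices to treat $t>0$.

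To bound it, I would multiply by $t^2$ and show that
$$\sup_{t>0}\; t^2\left|\frac{2\cosh(\pi\Delta)(1-\cos x)}{t^2+1}+\frac{2\sinh(\pi\Delta)\sin x}{t(t^2+1)}\right|\leq 3.4\,c^{\pm}_\Delta .$$
Using $1-\cos x\le 2$, $|\sin x|\le \min(1,x)$ and $t^2/(t^2+1)\le1$, the left side is at most
$$4\cosh(\pi\Delta)+2\sinh(\pi\Delta)\,\min\!\left(\frac{t}{t^2+1},\,\frac{2\pi\Delta t^2}{t^2+1}\right)\le 4\cosh(\pi\Delta)+\sinh(\pi\Delta),$$
since $t/(t^2+1)\le\frac12$.

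The worst case is the majorant, where $c^{+}_\Delta=(e^{\pi\Delta/2}-e^{-\pi\Delta/2})^2=2\cosh(\pi\Delta)-2$ is the smaller of the two constants. The ratio $\bigl(4\cosh(\pi\Delta)+\sinh(\pi\Delta)\bigr)/\bigl(2\cosh(\pi\Delta)-2\bigr)$ is decreasing in $\Delta$, so it suffices to check $\Delta=\frac12$. The main obstacle is that this crude bound at $\Delta=\frac12$ gives about $(10.04+2.30)/3.02\approx4.1$, which exceeds $3.4$. The estimate must therefore be sharpened by not taking the two maxima independently.

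I would try the following sharpening first. Set $u=\sinh(\pi\Delta)/\bigl(t\cosh(\pi\Delta)\bigr)$. Then the bracket equals $2\cosh(\pi\Delta)\bigl[(1-\cos x)+u\sin x\bigr]$, and the maximum over $x$ of $(1-\cos x)+u\sin x$ is $1+\sqrt{1+u^2}\le 2+u^2/2$. This reduces the quantity to
$$\frac{2\cosh(\pi\Delta)}{c^{\pm}_\Delta}\left(2+\frac{\tanh^2(\pi\Delta)}{2t^2}\right)\frac{t^2}{t^2+1}.$$
Its supremum over $t>0$ is $\frac{4\cosh(\pi\Delta)}{c^{+}_\Delta}$ whenever $\tanh^2(\pi\Delta)\le 4$, which always holds. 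At $\Delta=\frac12$ this gives $10.04/3.018\approx 3.33<3.4$. The minorant case is smaller since $c^{-}_\Delta>c^{+}_\Delta$, and monotonicity in $\Delta\ge\frac12$ completes the argument.
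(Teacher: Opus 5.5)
Your proof is correct. It lands on exactly the paper's intermediate bound $4\cosh(\pi\Delta)/(c^{\pm}_\Delta t^2)$, which is $3.325\ldots/t^2$ at $\Delta=\frac12$ in the majorant case. The difference from the paper lies in how you organize the closed form.

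The paper uses the product representation
$h^{\pm}_\Delta(z)=\frac{2}{c^{\pm}_\Delta}\frac{\sin(\pi\Delta(z+1/2i))}{z+1/2i}\cdot\frac{\sin(\pi\Delta(z-1/2i))}{z-1/2i}$,
taken from Carneiro--Chirre--Milinovich. At $z=t\pm\frac{1}{2i}$ one of the two factors is evaluated on the real axis, and the sum becomes
$\frac{4}{c^{\pm}_\Delta}\frac{\sin(\pi\Delta t)}{t}\,\re\frac{\sin(\pi\Delta(t-i))}{t-i}$.
The bounds $|\sin(\pi\Delta t)|\le 1$, $|\sin(\pi\Delta(t-i))|\le\cosh(\pi\Delta)$ and $|t-i|\ge|t|$ then finish the proof in one line.

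You instead expand $h\cdot N$ directly, which splits the expression into a $(1-\cos x)$ term and a $\sin x$ term. Bounding these separately loses too much (your $4.1$). You recover the lost correlation through the amplitude--phase maximization $1+\sqrt{1+u^2}$, followed by the optimization in $t$. In the product form this correlation is built in, because $1-\cos x=2\sin^2(\pi\Delta t)$ and $\sin x=2\sin(\pi\Delta t)\cos(\pi\Delta t)$ share the factor $\sin(\pi\Delta t)$, so there is no false start. On the other hand, your computation is self-contained from the definition of $h^{\pm}_\Delta$ and does not need the factorization.

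One small point to add: you need the maximum of $|(1-\cos x)+u\sin x|$, not just of $(1-\cos x)+u\sin x$. Since the minimum is $1-\sqrt{1+u^2}\ge-\sqrt{1+u^2}$, the same bound $1+\sqrt{1+u^2}$ holds, but this deserves a line.
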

		\begin{proof}
			From \cite[Eq. (3.21)]{CChiM} with $\beta=\frac{1}{2}$ it follows that
			\begin{equation*}
				h^{\pm}_\Delta(z) = \dfrac{2}{c^\pm_\Delta}\left(\frac{\sin(\pi \Delta (z + {1}/{2i}))}{z + {1}/{2i}} \right)\left(\frac{\sin(\pi \Delta (z - {1}/{2i}))}{z - {1}/{2i}} \right).
			\end{equation*}
			Consequently,
			\begin{equation*}
				h^{\pm}_\Delta\left(t-\frac{1}{2i}\right)+h^{\pm}_\Delta\left(t+\frac{1}{2i}\right) = \dfrac{4}{c^\pm_\Delta} \left(\frac{\sin{(\pi\Delta t)}}{t}\right)\left(\re\frac{\sin{(\pi\Delta (t-i))}}{t-i}\right)=O^*\bigg(\dfrac{2\,(e^{\pi\Delta} + e^{-\pi\Delta})}{c^{\pm}_\Delta t^2}\bigg).
			\end{equation*}
			Since $2\,(e^{\pi\Delta} + e^{-\pi\Delta})/c^\pm_\Delta\leq 3.325\ldots$ for $\Delta\geq \frac{1}{2}$, the claim follows.
		\end{proof}

		\vspace{0.2cm}
		
		\section{Proof of Theorem \ref{thm:Rezeta'/zeta}}
		By Lemma \ref{4_04pm} and \eqref{I)} we have for $t>0$ and $\Delta>0$ that
		\begin{equation} \label{10_23am}   
			\pm \re\dfrac{\zeta'}{\zeta}(1+it)\leq \pm \left(\sum_{\gamma}h_\Delta^{\pm}(t-\gamma) - \frac{1}{2}\log\left(\frac{t}{2\pi}\right)\right)+\dfrac{7}{4t^2}.
		\end{equation} 
		For a fixed $t$, the function $z\mapsto h^{\pm}_\Delta(t-z)$ satisfies the hypotheses of the Guinand-Weil explicit formula (see \cite[p. 634]{CChiM}). Hence, recalling that $h^{\pm}_\Delta$ are even, 
		\begin{align*}
			\displaystyle\sum_{\gamma}h^{\pm}_\Delta(t-\gamma) & = 
			\dfrac{1}{2\pi}\int_{-\infty}^{\infty}h^{\pm}_{\Delta}(t-u)\,\re\,\dfrac{\Gamma'}{\Gamma}\bigg(\dfrac{1}{4}+\dfrac{iu}{2}\bigg) \,\du -\dfrac{\log\pi}{2\pi}\,\widehat{h^{\pm}_\Delta}(0) \\ 
			& \ \ \ \ + h^{\pm}_\Delta\left(t-\frac{1}{2i}\right)+h^{\pm}_\Delta\left(t+\frac{1}{2i}\right)- \frac{1}{\pi}\sum_{n\geq 2}\dfrac{\Lambda(n)}{\sqrt{n}} \,\widehat{h^{\pm}_{\Delta}}\left(\dfrac{\log n}{2\pi}\right) \cos(t \log n).
		\end{align*}
		Combining Lemmas \ref{1_57pm}, \ref{1_57pm2}, \ref{1_57pm3}, and \eqref{II)}: $\widehat{h^{\pm}_\Delta}(0)=\pi\big(1+\frac{2}{\pm e^{\pi\Delta}-1}\big)$, we obtain, for $t>0$ and $\Delta\geq \frac{1}{2}$,
		\begin{align*}
			\displaystyle\sum_{\gamma}h^{\pm}_\Delta(t-\gamma)  = & 
			\bigg(\frac{1}{2} +\frac{1}{\pm e^{\pi\Delta}-1} \bigg)\log\left(\frac{t}{2\pi}\right) +O^*\left( \dfrac{4.7}{t^2}\right) \\ 
			& \,\,\,\, +O^*\left(\dfrac{e^{\pi\Delta}}{c^{\pm}_\Delta}\left(2\pi\Delta - (1+\gamma) + \dfrac{\log 2\pi}{e^{2\pi\Delta}} + \dfrac{2|B|}{e^{\pi\Delta}}-\frac{1}{6e^{6\pi\Delta}}\right)\right).
		\end{align*}
		Substituting this identity into \eqref{10_23am}, 
		we obtain for $t>0$ and $\Delta\geq \frac{1}{2}$,
		\begin{align} \label{7_42pm}
		\!\!\!	\pm \re\dfrac{\zeta'}{\zeta}(1+it)\leq &  \dfrac{\log t - \log 2\pi}{ e^{\pi\Delta}\mp 1}+\frac{e^{2\pi\Delta}}{(e^{\pi\Delta}\mp 1)^2}\left(2\pi\Delta - (1+\gamma) + \dfrac{\log 2\pi}{e^{2\pi\Delta}} + \dfrac{2|B|}{e^{\pi\Delta}}\right) + \left(\dfrac{6.5}{t^2}-\dfrac{1}{6c^{\pm}_\Delta e^{5\pi\Delta}}\right).
		\end{align} 

		Since our goal is to minimize the principal term of $ \re({\zeta'}/{\zeta})\left(1+\ie t\right)$, this term must be $2\pi\Delta$.
		Therefore, in order to optimize $\Delta$, we must ensure that $e^{-\pi\Delta}\log t=o(2\pi\Delta)$.
		Let $\pi\Delta=(1-\epsilon(t))\log\log t$. From the previous bound, it follows that $\epsilon \to 0$ as $t \to\infty$, hence $2\log\log t$ is the principal term. In order to minimize the constant term, we choose $\epsilon(t) = \frac{\log2}{\log\log t}$. This implies that $e^{\pi\Delta}=\frac{\log t}{2}$. Then, we rewrite
		$$
		\frac{\log t - \log 2\pi}{e^{\pi\Delta} \mp  1} = 2  + \dfrac{\pm 2 - \log 2\pi}{e^{\pi\Delta} \mp 1},
		$$
		and 
		$$
		\frac{e^{2\pi\Delta}}{(e^{\pi\Delta}\mp 1)^2}(2\pi\Delta - (1+\gamma)) = 2\pi\Delta -1 -\gamma -  \left(\dfrac{1\mp 2e^{\pi\Delta}}{(e^{\pi\Delta} \mp 1)^2}\right)\,(2\pi\Delta -1 -\gamma).
		$$
		Thus, in \eqref{7_42pm}, for $t\geq e^{18}$ (that is, $\Delta\geq 0.699\ldots$), the last term in parentheses on the right-hand side of \eqref{7_42pm} is negative; therefore,
		\begin{equation*} 
			\pm \re\frac{\zeta'}{\zeta}\left(1+\ie t\right)  \leq 2\pi\Delta + 1 -\gamma + \varepsilon_{\pm}(\Delta),
		\end{equation*}
		where
		\begin{align*}
			\varepsilon_{\pm}(\Delta) & = \frac{\pm 2-\log 2\pi}{e^{\pi\Delta} \mp  1} -\dfrac{(1\mp 2e^{\pi\Delta})(2\pi\Delta -1 -\gamma)}{(e^{\pi\Delta}\mp 1)^{2}}+ \dfrac{2|B|e^{\pi\Delta}+\log2\pi}{(e^{\pi\Delta} \mp 1)^2} \\
			& = \dfrac{2+(1\mp2)\gamma-\log8\pi^2\pm 4\pi\Delta}{e^{\pi\Delta} \mp 1}+\dfrac{\pm 2-1+(\pm1-1)\gamma + \log2\pi\mp\log 4\pi +2\pi\Delta}{(e^{\pi\Delta}\mp 1)^{2}},
		\end{align*}
	where we used \eqref{3_35pm}. Since $t\geq e^{18}$, we have $\pi\Delta \geq \log 9$, and numerically\footnote{It can be shown that the functions $\Delta\mapsto 2e^{\pi\Delta}\varepsilon_{\pm}(\Delta)\mp8\log({2e^{\pi\Delta}})$ are decreasing for $\pi\Delta\geq \log 9$.}, we can show that 
		$$\varepsilon_{\pm}(\Delta)\leq \pm\dfrac{8\log(2e^{\pi\Delta})}{2e^{\pi\Delta}}\mp \frac{\eta^{\pm}}{2e^{\pi\Delta}},$$ 
		where $\eta^{+}=8.6544$ and $\eta^{-}=6.9856$. Recalling that $e^{\pi\Delta}= \frac{\log t}{2}$, we finally obtain, for $t\geq e^{18}$,
		\begin{align*}
			\pm\re\frac{\zeta'}{\zeta}\left(1+\ie t\right) \leq 2\log\log t +1-\gamma-\log4 \pm \frac{8\log\log t}{\log t}\mp\dfrac{\eta^\pm}{\log t}.
		\end{align*}
This completes the proof of Theorem \ref{thm:Rezeta'/zeta}.

					\section{Lemmas related to the work of Lamzouri, Li, and Soundararajan} In this section we establish several lemmas that will be useful to prove Theorem \ref{thm:zeta}. These results were either established earlier in \cite{Sound} or are modifications that are partially similar, with proofs following analogous arguments.
					
					\begin{lemma} \label{logzeta}
						Assume the Riemann hypothesis. For any $t \geq2$ and $x \geq 2$ we have
						\begin{equation*}
							\begin{split}
								\log |\zeta(1+it)| & = \re\sum_{n\leq x}\frac{\Lambda(n)}{n^{1+it}\log n}\frac{\log(x/n)}{\log x} \, -\frac{1}{\log x}\re\frac{\zeta'}{\zeta}(1+it) \, \\ & \ \ \ \  + O^*\bigg(\frac{1}{\sqrt{x}\log^2x}\sum_{\rho}\frac{1}{|\rho-it|^2}\bigg)+ O^*\bigg(\frac{2}{t^2\log^2x}\bigg).
							\end{split}
						\end{equation*}
					\end{lemma}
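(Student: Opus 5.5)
The plan is the standard Perron-type argument with the weight $\log(x/n)$. Set $s=1+it$ and consider
$$J=\frac{1}{2\pi i}\int_{c-i\infty}^{c+i\infty}-\frac{\zeta'}{\zeta}(s+w)\frac{x^w}{w^2}\,\d w,\qquad c=2 .$$
Expanding the Dirichlet series termwise and using $\frac{1}{2\pi i}\int_{(c)} y^w w^{-2}\,\d w=\log y$ for $y\ge1$ (and $0$ for $y<1$) gives
$$J=\sum_{n\le x}\frac{\Lambda(n)}{n^{s}}\log(x/n).$$

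Next, shift the contour to the far left. Since $x\ge 2$ the integrand decays, and the horizontal and far-left contributions vanish in the limit; justifying this in the usual way is routine. The residues collected are as follows:
\begin{itemize}
\item at $w=0$ (double pole): $-\frac{\zeta'}{\zeta}(s)\log x-\big(\frac{\zeta'}{\zeta}\big)'(s)$;
\item at $w=1-s$ (pole of $\zeta$): $\frac{x^{1-s}}{(1-s)^2}$;
\item at $w=\rho-s$ for each nontrivial zero: $-\frac{x^{\rho-s}}{(\rho-s)^2}$;
\item at $w=-2k-s$ for the trivial zeros: $-\frac{x^{-2k-s}}{(2k+s)^2}$.
\end{itemize}
Rearranging,
$$-\Big(\frac{\zeta'}{\zeta}\Big)'(s)=\sum_{n\le x}\frac{\Lambda(n)}{n^s}\log(x/n)+\frac{\zeta'}{\zeta}(s)\log x-\frac{x^{1-s}}{(1-s)^2}+\sum_\rho\frac{x^{\rho-s}}{(\rho-s)^2}+\sum_{k\ge1}\frac{x^{-2k-s}}{(2k+s)^2}.$$

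Now integrate in $\sigma$ along the horizontal ray from $s=1+it$ to $+\infty$. The left side integrates to $\frac{\zeta'}{\zeta}(1+it)$. The prime sum gives $\sum_{n\le x}\frac{\Lambda(n)\log(x/n)}{n^{1+it}\log n}$. The $\frac{\zeta'}{\zeta}\log x$ term gives $-\log x\,\log\zeta(1+it)$. Dividing by $\log x$ and taking real parts yields the main terms of the lemma, namely $\log|\zeta(1+it)|$, the weighted prime sum, and $-\frac{1}{\log x}\re\frac{\zeta'}{\zeta}(1+it)$, with the correct signs.

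It remains to bound the three error families after the $\sigma$-integration and the division by $\log x$.
\begin{itemize}
\item \emph{Zeros.} Under RH, $|x^{\rho-\sigma-it}|=x^{1/2-\sigma}$, and $|\rho-\sigma-it|\ge|\rho-1-it|$ for $\sigma\ge1$. So $\int_1^\infty x^{1/2-\sigma}\,\d\sigma=\frac{x^{-1/2}}{\log x}$, giving $O^*\big(\frac{1}{\sqrt x\log^2x}\sum_\rho|\rho-1-it|^{-2}\big)$.
\item \emph{Pole.} The term $\frac{x^{1-s}}{(1-s)^2}$ gives at most $\frac{1}{t^2\log^2x}$, using $\int_1^\infty x^{1-\sigma}\,\d\sigma=\frac{1}{\log x}$.
\item \emph{Trivial zeros.} These contribute at most $\sum_k\frac{x^{-2k-1}}{t^2\log^2 x}\le\frac{1}{t^2\log^2x}$ since $x\ge2$. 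Together with the pole this gives the stated $\frac{2}{t^2\log^2x}$.
\end{itemize}

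The main obstacle is matching the zero term exactly as stated, with $|\rho-it|$ rather than $|\rho-1-it|$. Since $|\rho-1-it|=|\rho-it|$ whenever $\re\rho=\tfrac12$ (the points are symmetric about the line $\re=\tfrac12$, as $|\tfrac12+i(\gamma-t)-1|=|\tfrac12+i(\gamma-t)|$), the two forms coincide under RH, so this step is immediate.

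The genuinely delicate point is justifying the interchange of the sum over zeros with the $\sigma$-integral. I would handle it by absolute convergence, since $\sum_\rho|\rho-s|^{-2}<\infty$ and $x^{1/2-\sigma}$ is integrable on $[1,\infty)$.
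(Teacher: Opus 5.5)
Your proposal is correct and follows essentially the same route as the paper. Both use Perron's formula with the kernel $x^w/w^2$ and shift the contour to pick up the residues at $w=0$, $w=1-s$, $w=\rho-s$ and the trivial zeros. Both then apply the RH bound $|\rho-\sigma-it|\ge|\rho-it|$, integrate in $\sigma$ over $[1,\infty)$, and take real parts. The differences are only cosmetic: you isolate $(\zeta'/\zeta)'$ rather than $\zeta'/\zeta$ before integrating, and you bound the pole term and the trivial-zero terms separately, each by $1/(t^2\log^2 x)$, whereas the paper bounds them together through the factor $1+2x^{-3}$.
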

					\begin{proof} The proof follows \cite[Lemma 2.5]{Sound} closely. Let $\sigma\geq 1$. By Perron's formula, we have
						\begin{equation*}
							\frac{1}{2\pi i}\int_{2-i\infty}^{2+i\infty} -\frac{\zeta'}{\zeta}(s+\sigma+it)\frac{x^s}{s^2}\ds = \sum_{n\leq x}\frac{\Lambda(n)}{n^{\sigma+it}}\log\left(\dfrac{x}{n}\right).
						\end{equation*}
						Since the integrand has poles at $s=1-\sigma-it$ (order $1$), $s=0$ (order $2$), $s=\rho-\sigma-it$ (order $1$ with $\rho$ non-trivial zero of $\zeta$), and $s=-2n-\sigma-it$, $n\in\N$ (order $1$), we find that, after shifting the line of integration to the left, the above integral also equals
						\begin{equation*}
							\frac{x^{1-\sigma-it}}{(\sigma-1+it)^2} - \frac{\zeta'}{\zeta}(\sigma+it)\log x - \bigg(\frac{\zeta'}{\zeta}\bigg)'(\sigma+it) -
							\sum_{\rho}\dfrac{x^{\rho-\sigma-it}}{(\rho-\sigma-it)^2} - \sum_{n=1}^\infty\dfrac{x^{-2n-\sigma-it}}{(2n+\sigma+it)^2}.
						\end{equation*}
						Therefore, using that $|\rho-\sigma-it|\geq |\rho-it|$ due to RH and rearranging the terms, we obtain
						\begin{equation*}
							\begin{split}
								- \frac{\zeta'}{\zeta}(\sigma+it) & = \frac{1}{\log x}\sum_{n\leq x}\frac{\Lambda(n)}{n^{\sigma+it}}\log\left(\dfrac{x}{n}\right) + \frac{1}{\log x}\bigg(\frac{\zeta'}{\zeta}\bigg)'(\sigma+it) \\ & \ \ \ \ \, + O^*\bigg(\frac{1+2x^{-3}}{t^2\log x}x^{1-\sigma}\bigg) +O^*\bigg(\frac{x^{\frac{1}{2} -\sigma}}{\log x}\sum_{\rho}\frac{1}{|\rho-it|^2}\bigg).
							\end{split}
						\end{equation*}
						The lemma follows by integrating over $\sigma$ from 1 to $\infty$ and taking real parts on both sides.
					\end{proof}

					\begin{lemma} \label{12_01pm}
						Assume the Riemann hypothesis. For all $x\geq e$, we have
						$$
						\sum_{n\leq x}\frac{\Lambda(n)}{n\log n}\frac{\log(x/n)}{\log x} \leq \log\log x + \gamma -1 + \dfrac{\gamma}{\log x} + \dfrac{2|B|}{\sqrt{x}\log^2x}
						$$
						where $B$ is defined in \eqref{3_35pm}.
					\end{lemma}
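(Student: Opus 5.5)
The plan is to convert the weighted sum into integrals of the Chebyshev function $\psi(y)=\sum_{n\le y}\Lambda(n)$ and then apply the explicit formula for $\psi$ under RH. The weights are smooth in $y$, so the zeros enter only through an absolutely convergent sum. Writing
$$\frac{\log(x/n)}{\log n\,\log x}=\frac{1}{\log n}-\frac{1}{\log x}=\int_n^x \frac{\du}{u\log^2 u},$$
we obtain
$$S(x):=\sum_{n\le x}\frac{\Lambda(n)}{n\log n}\frac{\log(x/n)}{\log x}=\int_2^x \frac{1}{u\log^2u}\sum_{n\le u}\frac{\Lambda(n)}{n}\,\du .$$
Next I would use Perron's formula with kernel $x^s/s^2$, following the proof of Lemma \ref{logzeta}. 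An equivalent route is to integrate that lemma's intermediate identity in $\sigma$ at $t=0$. This gives, for $\sigma\ge1$,
$$\sum_{n\le x}\frac{\Lambda(n)}{n^{\sigma}}\log\frac{x}{n}=\frac{x^{1-\sigma}}{(\sigma-1)^2}-\frac{\zeta'}{\zeta}(\sigma)\log x-\Big(\frac{\zeta'}{\zeta}\Big)'(\sigma)-\sum_\rho\frac{x^{\rho-\sigma}}{(\rho-\sigma)^2}-\sum_{k\ge1}\frac{x^{-2k-\sigma}}{(2k+\sigma)^2}.$$
At $\sigma=1$ the pole of $\zeta$ produces a double pole, so it is cleaner to work at $\sigma>1$ and integrate in $\sigma$ from $1^+$ to $\infty$, exactly as in Lemma \ref{logzeta}, but now at $t=0$. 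Dividing by $\log x$ and integrating over $\sigma\in(1,\infty)$ gives
$$S(x)=\lim_{\sigma\to1^+}\Big[\log\zeta(\sigma)+\frac{1}{\log x}\frac{\zeta'}{\zeta}(\sigma)-\frac{1}{\log x}\int_\sigma^\infty\frac{x^{1-\alpha}}{(\alpha-1)^2}\,d\alpha\Big]+E,$$
where $E$ collects the zero and trivial-zero contributions.

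The main terms must be handled through this limit. Near $\sigma=1$ we have $\log\zeta(\sigma)=-\log(\sigma-1)+O(\sigma-1)$ and $\frac{\zeta'}{\zeta}(\sigma)=-\frac{1}{\sigma-1}+\gamma+O(\sigma-1)$. Also
$$\int_\sigma^\infty \frac{x^{1-\alpha}}{(\alpha-1)^2}\,d\alpha=\frac{x^{1-\sigma}}{\sigma-1}-\log x\;E_1\big((\sigma-1)\log x\big),$$
and $E_1(z)=-\gamma-\log z+O(z)$. The singular parts cancel, and the limit equals $\log\log x+\gamma-1+\frac{\gamma}{\log x}$. This is exactly the main term in the statement, which confirms that the decomposition is the right one.

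It remains to bound $E$. The zero contribution is
$$-\frac{1}{\log x}\int_1^\infty\sum_\rho\frac{x^{\rho-\sigma}}{(\rho-\sigma)^2}\,d\sigma .$$
Under RH we have $|\rho-\sigma|\ge|\rho|$ for $\sigma\ge1$ and $|x^{\rho-\sigma}|=x^{1/2-\sigma}$. Hence this term is at most
$$\frac{1}{\log x}\sum_\rho\frac{1}{|\rho|^2}\int_1^\infty x^{1/2-\sigma}\,d\sigma=\frac{1}{\sqrt x\log^2x}\sum_\rho\frac{1}{|\rho|^2}=\frac{2|B|}{\sqrt x\log^2x}.$$
The trivial-zero term is
$$-\frac{1}{\log x}\int_1^\infty\sum_k\frac{x^{-2k-\sigma}}{(2k+\sigma)^2}\,d\sigma,$$
which is negative, so for an upper bound it may simply be dropped. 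This produces the stated inequality, valid for all $x\ge e$; the restriction $x\ge e$ only ensures $\log x\ge1$ and that the Perron setup is legitimate.

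The main obstacle is the interchange of the limit $\sigma\to1^+$ with the $\sigma$-integration, that is, justifying that the Perron identity integrated over $(1,\infty)$ recovers $S(x)$ with the correct constant $\gamma-1+\gamma/\log x$. I would handle this by working on $[1+\epsilon,\infty)$, where every expansion is explicit, and letting $\epsilon\to0$ at the end. The absolute convergence of $\sum_\rho|\rho|^{-2}$ makes the zero term uniform in $\epsilon$, so the limit passes through without difficulty.
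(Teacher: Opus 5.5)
Your proof is correct and is essentially the argument behind \cite[Lemma 2.6]{Sound}, which the paper simply cites; it also mirrors the paper's own proof of Lemma \ref{logzeta}. The steps are Perron with kernel $x^s/s^2$, integration in $\sigma$ at $t=0$ with the limit $\sigma\to1^+$, and the bound $|\rho-\sigma|\ge|\rho|$ giving $\frac{2|B|}{\sqrt{x}\log^2x}$. Your observation that the trivial-zero contribution is negative is precisely the paper's reason for omitting the term $\frac{\theta}{3x^3\log^2x}$.

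One sign slip needs fixing. The term $\frac{1}{\log x}\int_\sigma^\infty\frac{x^{1-\alpha}}{(\alpha-1)^2}\,d\alpha$ enters with a plus sign, since it comes from the residue $+\frac{x^{1-\sigma}}{(\sigma-1)^2}$. Only with that sign do the singularities $-\log(\sigma-1)$ and $-\frac{1}{(\sigma-1)\log x}$ cancel (with your minus sign they would double) and leave $\log\log x+\gamma-1+\frac{\gamma}{\log x}$.
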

					\begin{proof}
						See \cite[Lemma 2.6]{Sound} for the proof. Here we may omit the term $\frac{\theta}{3x^3\log^2x}$ appearing in \cite[Lemma 2.6]{Sound}, since its contribution becomes negative later in the proof, thereby yielding our inequality.
					\end{proof}
					
					\begin{lemma} \label{12_01pm2}
						Assume the Riemann hypothesis. For all $x>1$, we have
						\begin{align*}
							\sum_{n\leq x}\frac{\Lambda(n)}{n}\left(1-\frac{n}{x}\right) \leq \log x - (1+\gamma) + \dfrac{\log 2\pi}{x} + \dfrac{2|B|}{\sqrt{x}}-\dfrac{1}{6x^3}.
						\end{align*}
						where $B$ is defined in \eqref{3_35pm}.
					\end{lemma}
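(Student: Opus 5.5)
The plan is to write the weighted sum as a Perron-type integral, shift the contour to the left, and bound each residue explicitly under the Riemann hypothesis. Start from the identity
\begin{equation*}
\sum_{n\leq x}\frac{\Lambda(n)}{n}\Big(1-\frac{n}{x}\Big) = \frac{1}{2\pi i}\int_{2-i\infty}^{2+i\infty}\Big(-\frac{\zeta'}{\zeta}(s+1)\Big)\frac{x^{s}}{s(s+1)}\,\ds ,
\end{equation*}
which is valid for $x>1$ because $\frac{1}{2\pi i}\int_{(2)} \frac{y^s}{s(s+1)}\,\ds=(1-1/y)\mathbf{1}_{y>1}$. The kernel decays like $|s|^{-2}$, so the horizontal segments vanish and moving the line far to the left is justified, just as in the proof of Lemma \ref{logzeta}.

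Next collect the residues. The double pole at $s=0$, which comes from the pole of $\zeta$ at $1$ and the factor $1/s$, gives $\log x-(1+\gamma)$. Here I use $-\frac{\zeta'}{\zeta}(1+s)=\frac1s-\gamma+O(s)$ and $\frac{x^s}{s+1}=1+s(\log x-1)+\dots$. The pole at $s=-1$, where $s+1=0$ meets $\zeta(0)\neq0$, gives $-\frac{\zeta'}{\zeta}(0)\cdot\frac{x^{-1}}{-1}=\frac{\log 2\pi}{x}$. Each nontrivial zero gives $-\frac{x^{\rho-1}}{(\rho-1)\rho}$. Each trivial zero $-2k$ gives $-\frac{x^{-2k-1}}{(-2k-1)(-2k)}=-\frac{x^{-2k-1}}{2k(2k+1)}$. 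Altogether,
\begin{equation*}
\sum_{n\leq x}\frac{\Lambda(n)}{n}\Big(1-\frac nx\Big)=\log x-(1+\gamma)+\frac{\log 2\pi}{x}-\sum_\rho\frac{x^{\rho-1}}{\rho(\rho-1)}-\sum_{k\ge1}\frac{x^{-2k-1}}{2k(2k+1)}.
\end{equation*}
This is the classical explicit formula; the signs should be double-checked.

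It remains to bound the two sums. Under RH we have $\rho(\rho-1)=-|\rho|^2$, since $1-\rho=\bar\rho$. So the zero sum is bounded in absolute value by $x^{-1/2}\sum_\rho|\rho|^{-2}=2|B|x^{-1/2}$, using the identity recorded after \eqref{3_35pm}. The trivial-zero sum is a sum of positive terms, and we simply keep its first term $\frac{1}{6x^3}$ with a minus sign while discarding the rest, which is allowed because they are all nonnegative. This gives the stated inequality.

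I expect the main obstacle to be the rigorous contour shift with an explicit convergent zero sum. One must choose heights $T$ avoiding ordinates of zeros and use $\frac{\zeta'}{\zeta}(\sigma+iT)\ll\log^2 T$ there, and on the far-left vertical lines with $\sigma\to-\infty$ at half-odd integers use the functional equation to control $\frac{\zeta'}{\zeta}$. The case $x$ a prime power, where there is a discontinuity, is harmless because the weight $1-n/x$ vanishes at $n=x$. Alternatively, one can simply cite \cite[Lemma 2.4]{Sound}, which is exactly this computation.
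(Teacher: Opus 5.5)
Your proposal is correct and is essentially the paper's argument. The paper cites \cite[Lemma 2.4]{Sound}, whose content is exactly the explicit formula you derive, with the signs as you have them, including the term $-\sum_{k\ge1}\frac{x^{-2k-1}}{2k(2k+1)}$; like you, it then bounds the zero sum by $2|B|/\sqrt{x}$ and keeps only the first term $\frac{1}{6x^3}$ of that nonnegative trivial-zero series (letting $x\to1^+$, where $\sum_\rho\frac{1}{\rho(\rho-1)}=2B$ and $\sum_{k}\frac{1}{2k(2k+1)}=1-\log 2$, confirms your signs).
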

					\begin{proof}
						See \cite[Lemma 2.4]{Sound} for the proof. Keeping only the first term of the series $\sum_{n=1}^\infty\frac{x^{-2n-1}}{2n(2n+1)}$, leads to the stated inequality.
					\end{proof}

					\begin{lemma} \label{12_31am} For $x\geq 81$ and $t\in\R$ we have 
						\begin{align*}
							\begin{split}
								\re\sum_{n\leq x}\frac{\Lambda(n)}{n^{it}}\left(\frac{1}{n\log n}-\frac{1}{x\log x}\right) \geq & -\sum_{n \leq x}\frac{\Lambda(n)}{n\log n}\frac{\log(x/n)}{\log x} - \frac{1}{\log x}\sum_{n\leq x}\frac{\Lambda(n)}{n}\left(1-\frac{n}{x}\right)  \\ & + \log\zeta(2) - \dfrac{0.249}{\sqrt{x}} -\dfrac{2}{\sqrt{x}\log x}.
							\end{split}
						\end{align*}
					\end{lemma}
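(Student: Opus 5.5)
The plan is to reduce everything to an exact algebraic splitting of the weights plus a prime-by-prime positivity argument that produces the constant $\log\zeta(2)$.

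\emph{Step 1: splitting the weights.} For $n\le x$ one has the identity
$$\frac{1}{n\log n}-\frac{1}{x\log x}=\frac{1}{n\log n}\,\frac{\log(x/n)}{\log x}+\frac{1}{n\log x}\Big(1-\frac{n}{x}\Big).$$
So the weights $a_n:=\Lambda(n)\big(\frac{1}{n\log n}-\frac{1}{x\log x}\big)$ are nonnegative. The trivial bound $\re n^{-it}\ge -1$ then gives exactly the first two terms on the right-hand side of the lemma. It remains to gain $\log\zeta(2)$ from the prime squares and higher powers, at the cost of the two small error terms.

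\emph{Step 2: gaining $\log\zeta(2)$ prime by prime.} Fix a prime $p$ and put $\theta=t\log p$. For the infinite series, $\sum_{k\ge1}\frac{e^{-ik\theta}}{kp^k}=-\log(1-p^{-1}e^{-i\theta})$, so
$$\re\sum_{k\ge1}\frac{e^{-ik\theta}}{kp^k}\ge-\log\Big(1+\frac1p\Big)=-\sum_{k\ge1}\frac{1}{kp^k}-\log\big(1-p^{-2}\big).$$
Summing the last term over all $p$ produces $\log\zeta(2)$. This handles the main part $\frac{\Lambda(n)}{n\log n}$ of the weight, for which $\Lambda(p^k)/(p^k\log p^k)=1/(kp^k)$. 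The only defect is that the series must be truncated at $p^k\le x$, whereas the inequality uses the full series.

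\emph{Step 3: the truncation error.} Write the sum over $n\le x$ as the full Euler-type sum over all prime powers, minus the tail over $p^k>x$. There are two places where the tail must be controlled.
\begin{itemize}
\item In the gain $\sum_p-\log(1-p^{-2})$, the primes $p>\sqrt x$ (where even $p^2>x$) should not be credited. Their contribution is $\le\sum_{p>\sqrt x}\frac{1}{p^2}(1+O(p^{-2}))$, which I would bound by partial summation with an explicit Chebyshev estimate; this should produce the $\frac{2}{\sqrt x\log x}$ term.
\item For $p\le\sqrt x$, the tail $\sum_{p^k>x}\frac{1}{kp^k}$, taken in absolute value, together with the $-\frac{1}{x\log x}$ part of the weight on those terms, is comparable to $\sum_{p\le \sqrt x}p^{-k_p}$ with $p^{k_p}>x$. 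Since $k\ge2$ here, this is $\ll \frac{1}{x}\pi(\sqrt x)$, which I would bound explicitly; this should give the $\frac{0.249}{\sqrt x}$ term.
\end{itemize}
In both cases one must check that applying the inequality of Step 2 to the full series, then subtracting the tail, does not lose more than these bounds. Here $x\ge81$ is used to make the numerical constants valid.

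\emph{Step 4: the second part of the weight.} The part $\frac{1}{n\log x}(1-\frac nx)$ is simply bounded by $\re n^{-it}\ge-1$, and it already appears in the statement. Only the $\frac{1}{n\log n}$ part of the weight is routed through Step 2. To make this legitimate, I would first write $a_n=\Lambda(n)\frac{1}{n\log n}-\Lambda(n)\frac{1}{x\log x}$, and apply Step 2 to the first sum and $\re\ge-1$ to the second. Then I would recombine using the identity from Step 1, which yields precisely the two stated negative sums.

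The main obstacle I expect is Step 3: obtaining the explicit constants $0.249$ and $2$ uniformly for $x\ge81$. This requires explicit prime-counting bounds, for instance $\theta(y)<1.01624\,y$ of Rosser--Schoenfeld type, in the partial summations. It also requires care in how the $-\frac{1}{x\log x}$ correction interacts with the truncated prime powers.
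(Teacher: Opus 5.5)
\textbf{There is a genuine gap in Step 4.} Put $w(n)=\frac{1}{n\log n}-\frac{1}{x\log x}$. By your own Step 1 identity, the two negative sums in the statement add up to
$$-\sum_{n\le x}\Lambda(n)w(n)=-\sum_{n\le x}\frac{\Lambda(n)}{n\log n}+\frac{\psi(x)}{x\log x},$$
with a \emph{plus} sign on $\frac{\psi(x)}{x\log x}$.

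In Step 4 you split off $-\frac{1}{x\log x}\re\sum_{n\le x}\Lambda(n)n^{-it}$ and bound it trivially. Its coefficient is negative, so $\re n^{-it}\ge -1$ gives an upper bound here, not a lower bound. The only trivial lower bound is $-\frac{\psi(x)}{x\log x}$. After recombining you therefore get $-\sum_{n\le x}\frac{\Lambda(n)}{n\log n}-\frac{\psi(x)}{x\log x}+\log\zeta(2)-\cdots$. This falls short of the claim by $\frac{2\psi(x)}{x\log x}\approx\frac{2}{\log x}$, which is about $0.45$ at $x=81$, against a total error allowance of about $0.08$. No identity can repair this.

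The other split you mention also fails. Suppose you send the nonnegative piece $\frac{\Lambda(n)}{n\log x}(1-\frac{n}{x})$ through the trivial bound and the rest through Step 2. You then forfeit that piece's even-power gain, of size $\frac{2}{\log x}\sum_{m\le\sqrt x}\frac{\Lambda(m)}{m^2}(1-\frac{m^2}{x})\asymp\frac{1}{\log x}$. Moreover, the closed form of Step 2 does not apply to the remaining weights $\frac{1}{kp^k}\frac{\log(x/p^k)}{\log x}$. In short, the weight $w$ cannot be cut into pieces that are each handled by a different extremal configuration.

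\textbf{What is missing.} You need to show that $p^{-it}=-1$ is extremal for the whole truncated weight, prime by prime:
$$\re\sum_{n\le x}\Lambda(n)w(n)n^{-it}\ge\sum_{p^k\le x}(-1)^k\Lambda(p^k)w(p^k).$$
Write $p^{-it}=-e^{i\theta}$. For fixed $p$ the difference is $\log p\sum_{k\le \log x/\log p}(-1)^{k-1}(1-\cos k\theta)\,w(p^k)$.
\begin{itemize}
\item For $p\ge3$: keep the $k=1$ term and discard the other odd $k$, which is allowed since $w(p^k)\ge0$. Then dominate the even $k$ using $1-\cos k\theta\le k^2(1-\cos\theta)$.
\item For $p=2$ a separate argument is needed. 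Lemma 5.1 of Lamzouri--Li--Soundararajan covers $x\ge100$; for $81\le x<100$ one checks a trigonometric polynomial numerically.
\end{itemize}
With this inequality the right-hand side equals exactly $-\sum_{n\le x}\Lambda(n)w(n)+\log\zeta(2)-Q(x)$, where $Q(x)=\sum_{m>\sqrt x}\frac{\Lambda(m)}{m^2\log m}+\frac{2\psi(\sqrt x)}{x\log x}$. All that remains is an explicit bound for $Q$. The second term of $Q$ is the cost of the $-\frac{1}{x\log x}$ weight on the even powers $m^2\le x$, and your error accounting omits it.

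\textbf{Why the tail estimate of Step 3 cannot work.} Exactness here is essential, not cosmetic. By Kronecker's theorem, the infimum over $t$ of the left-hand side is precisely its value at $p^{-it}=-1$ for all $p\le x$. At $x=121$ one computes $\sqrt{x}\,Q(x)-\frac{2}{\log x}=0.24894\ldots$, so the lemma holds there with a margin of only about $5\cdot10^{-6}$. Step 3 applies the closed form to the infinite series and then bounds the tails $k>K_p$ in absolute value. At $x=121$ the prime $5$ alone (with $K_5=2$) already costs about $2/375$, roughly a thousand times the margin. So the per-prime inequality must be proved with no truncation loss.
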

					\begin{proof}
					We first prove that for $x\geq 81$,
						\begin{equation} \label{2_24am}
							\re\sum_{n\leq x}\frac{\Lambda(n)}{n^{it}}\left(\frac{1}{n\log n}-\frac{1}{x\log x}\right) \geq \sum_{p^k\leq x} \Lambda(p^k)(-1)^k\left(\frac{1}{p^k\log p^k}-\frac{1}{x\log x}\right).
						\end{equation}
						The proof of this estimate for $x\geq 100$ follows from \cite[Lemma 5.1]{Sound}, since its argument only requires the function $\chi$
						to be completely multiplicative,
						and in our case, the quantity playing this role is $n^{-it}$. We now explain how to extend the result to the range $x\geq 81$. As noted in \cite{Sound}, we consider the contribution of the powers of a fixed prime $p\leq x$ to both sides of \eqref{2_24am} and take the difference. For a fixed prime $p\geq 2$, writing $p^{-it}=-e^{i\theta}$, the difference between the contributions to the left-hand side and the right-hand side is 
						\begin{align} \label{1_59pm} 
							(\log p) \sum_{k\leq \frac{\log x}{\log p}}(-1)^{k-1}(1-\cos(k\theta))\left(\frac{1}{p^k\log p^k}-\frac{1}{x\log x}\right). \end{align} 
						When $p\geq 3$, we use the bound $1-\cos(k\theta)\leq k^2(1-\cos\theta)$. In \eqref{1_59pm}, separate the term $k=1$ and discard the terms with odd $k$, as well as other positive terms. The resulting sum is then bounded below by \begin{align} \label{2_13pm} 
							(\log p)(1-\cos\theta)\left(\frac{1}{p\log p}-\frac{1}{x\log x}-\sum_{l\leq \frac{\log x}{2\log p}}\dfrac{(2l)^2}{p^{2l}\log p^{2l}}\right). \end{align} If $p^2>x$, the sum in \eqref{2_13pm} is empty, which implies that the expression \eqref{2_13pm} (and hence also \eqref{1_59pm}) is nonnegative. If $p^2 \leq x$, the expression in \eqref{2_13pm} is bounded below by the same expression obtained by completing the sum over all $l\geq 1$, namely $$(\log p)(1-\cos\theta)\left(\dfrac{1}{p\log p}-\dfrac{1}{x\log x} - \dfrac{2p^2}{(p^2-1)^2\log p}\right)$$ which is again nonnegative.
						
						 When $p=2$, a different argument is required. Assume $81\leq x\leq 100$. Since $6<\frac{\log x}{\log p} <7$, the expression in \eqref{1_59pm} is bounded below by $\log 2$ times the trigonometrical polynomial $$\sum_{j\in\{1,3,5\}} (1-\cos(j\theta))\left(\dfrac{1}{2^{j}\log 2^{j}}-\dfrac{1}{81\log 81}\right) - \sum_{j\in\{2,4,6\}} (1-\cos(j\theta))\left(\dfrac{1}{2^{j}\log 2^{j}}-\dfrac{1}{100\log 100}\right).$$ Since this trigonometric polynomial is even and has period $2\pi$, one can verify numerically that its minimum value is $0$. This completes the proof of \eqref{2_24am}.

Now, the right-hand side of \eqref{2_24am} is exactly (\cite[Eq. (5.3)]{Sound}) 
						$$
						-\sum_{n \leq x}\frac{\Lambda(n)}{n\log n}\frac{\log(x/n)}{\log x} - \frac{1}{\log x}\sum_{n\leq x}\frac{\Lambda(n)}{n}\left(1-\frac{n}{x}\right) + 2 \sum_{m^2\leq x}\Lambda(m)\left(\dfrac{1}{m^2\log m^2}-\dfrac{1}{x\log x}\right).
						$$
						The third term above is $\log\zeta(2) -Q(x)$, where
						$$
						Q(x)=\sum_{m>\sqrt{x}}\dfrac{\Lambda(m)}{m^2 \log m} + 2\sum_{m\leq\sqrt{x}}\dfrac{\Lambda(m)}{x\log x}.
						$$
						It remains to prove that for $x\geq 81$
						\begin{align} \label{5_18pm}
							Q(x)\leq \left(0.249+ \dfrac{2}{\log x}\right)\dfrac{1}{\sqrt{x}}.
						\end{align}
						We first verify that \eqref{5_18pm} holds for $x\ge (73.2)^2$. Applying integration by parts together with the conditional bound $\psi(u):=\sum_{n\leq u}\Lambda(n)\leq u+\frac{1}{8\pi}\sqrt{u}\log^2u$ which holds for $u\geq 73.2$, see \cite[Theorem 10]{SchoenfeldSharperRH}, 
							\begin{equation*}
							\begin{split}
								Q(x) & =\int_{\sqrt{x}}^{\infty}\psi(u)\left(\dfrac{2}{u^3\log u}+\dfrac{1}{u^3\log^2u}\right) \du  \\
								& \leq \int_{\sqrt{x}}^{\infty}\left(\dfrac{1}{u^2\log u}+\dfrac{1}{u^2\log^2 u}\right)\du+\int_{\sqrt{x}}^{\infty}\left(\dfrac{1}{u^2\log u}+\dfrac{2\log u+1}{8\pi u^{{5}/{2}}}\right)\du \\
								& = \frac{2}{\sqrt{x}\log x}+\int_{\sqrt{x}}^{\infty}\left(\dfrac{1}{u^2\log u}+\dfrac{2\log u+1}{8\pi u^{{5}/{2}}}\right)\du.
							\end{split}
						\end{equation*}

						Let $f(u)$ denote the integrand on the right-hand side of the above expression. It is straightforward\footnote{This can be shown by introducing the function $G(y)=y\int_{y}^\infty f(u)\du$, proving that $G''(y)\geq 0$ for $y\geq 73.2$, and using the fact that $\lim_{y\to +\infty}G'(y)=0$.} to check that $\sqrt{x}\int_{\sqrt{x}}^\infty f(u)\du$ is decreasing for $x\geq (73.2)^2$. Hence $\sqrt{x}Q(x)-\frac{2}{\log x}\leq 0.229$, which implies that \eqref{5_18pm} holds in this range. On the other hand, for $81\leq x< (73.2)^2$, we see that
						\begin{align*}
							Q(x) - \dfrac{2}{\sqrt{x}\log x} & =\log\zeta(2) - \sum_{m\leq \sqrt{x}}{\Lambda(m)}\left(\dfrac{1}{m^2\log m }- \dfrac{2}{x\log x}\right)- \dfrac{2}{\sqrt{x}\log x} \\
							& = \left(\sqrt{x}\log\zeta(2)- \sqrt{x}\sum_{m\leq \sqrt{x}}{\Lambda(m)}\left(\dfrac{1}{m^2\log m }- \dfrac{2}{x\log x}\right) - \dfrac{2}{\log x}\right)\dfrac{1}{\sqrt{x}}.
						\end{align*}
						Numerically, the supremum of the term in parentheses over $81\leq x\leq (73.2)^2$ is bounded by $0.249$. Therefore, \eqref{5_18pm} holds for all $x\geq 81$ and we are done. 
					\end{proof}
					

					\begin{lemma} \label{11_18pm}
						Assume the Riemann hypothesis. For any $t\geq2$ and $x\geq 2$ we have
						\begin{align*}
								\re\sum_{n\leq x}\frac{\Lambda(n)}{n^{1+it}}\left(1-\frac{n}{x}\right) + \re\frac{\zeta'}{\zeta}(1+it) & = -\dfrac{1}{x}\left(\log\left(\frac{t}{2\pi}\right) + {\re\frac{\zeta'}{\zeta}(1+it)}\right)  \\
								&  \,\,\,\,\,\,  + O^*\left(\frac{1}{\sqrt{x}}\sum_{\rho}\frac{1}{|\rho-it|^2}\right)+O^*\left(\frac{3}{2t^2}\right).
						\end{align*}
					\end{lemma}
					\begin{proof} The proof is partially based on \cite[Lemma 2.4]{Sound}. By Perron's formula, we have
						\begin{equation*}
							\frac{1}{2\pi i}\int_{2-i\infty}^{2+i\infty} -\frac{\zeta'}{\zeta}(s+it)\frac{x^{s-1}}{s(s-1)}\ds = \sum_{n\leq x}\frac{\Lambda(n)}{n^{1+it}}\left(1-\frac{n}{x}\right).
						\end{equation*}
						Since there are poles at $s=0$, $s=1$, $s=1-it$, $s=\rho-it$ and $s=-2n-it$, we move the line of integration to the left, and the above integral then equals
						\begin{equation*}
							\frac{1}{x}\frac{\zeta'}{\zeta}(it) - \frac{\zeta'}{\zeta}(1+it) +\frac{ix^{-it}}{t(1-it)} -
							\sum_{\rho}\dfrac{x^{\rho-1-it}}{(\rho-it)(\rho -1-it)} - \sum_{n=1}^\infty\dfrac{x^{-2n-1-it}}{(2n+it)(2n+1+it)}.
						\end{equation*}
						Since RH holds, we have that $|(\rho-it)(\rho-1-it)|\geq |\rho-it|^2$.
						Consequently,
						\begin{align} \label{12_03pm}
							\sum_{n\leq x}\frac{\Lambda(n)}{n^{1+it}}\left(1-\frac{n}{x}\right)+ \frac{\zeta'}{\zeta}(1+it) = \frac{1}{x}\frac{\zeta'}{\zeta}(it)  + O^*\left(\dfrac{1}{\sqrt{x}}\sum_{\rho}\frac{1}{|\rho-it|^2}\right)+ O^*\left(\dfrac{1+2x^{-3}}{t^2}\right).
						\end{align}
						Taking the real part of the logarithmic derivative in the functional equation of $\zeta(s)$ (see \cite[Eq. (10.27)]{MV}) and the Stirling's formula \eqref{3_53pm2} we get,
						\begin{align} \label{12_05am}
							\begin{split}
								\re \frac{\zeta'}{\zeta}(it) = \re \frac{\zeta'}{\zeta}(-it) & = -\re \frac{\zeta'}{\zeta}(1+it)+\log 2\pi -\re \frac{\Gamma'}{\Gamma}(1+it) \\
								& = -\re \frac{\zeta'}{\zeta}(1+it)-\log\left(\dfrac{t}{2\pi}\right) + O^*\left(\dfrac{1}{2t^2}\right).
							\end{split}
						\end{align}
						The result follows by taking real parts in \eqref{12_03pm}, inserting \eqref{12_05am}, and using that $x\geq 2$.
					\end{proof}
					
						\vspace{0.2cm}

					\section{Proof of Theorem \ref{thm:zeta}}

					\subsection{Proof of \eqref{3pm}: upper bound for $|\zeta(1+it)|$} Since the Riemann hypothesis holds, by Lemma \ref{4_04pm}, we have for $t>0$,
					\begin{align} \label{12_14pm2}
						\sum_{\rho}\frac{1}{|\rho-it|^2}  = \sum_{\gamma}\dfrac{1}{\frac{1}{4}+(t-\gamma)^2} = 2\sum_{\gamma}h(t-\gamma)
						\leq \log \left(\frac{t}{2\pi}\right) +2\re\frac{\zeta'}{\zeta}(1+it) + \frac{7}{2t^2}.
					\end{align}
					Then, rearranging the terms in Lemma \ref{logzeta}, we obtain, for $t\geq 2$ and $x\geq 2$,
					\begin{equation} \label{12_39pm}
						\begin{split}
							\log |\zeta(1+it)| & \leq  \sum_{n\leq x}\frac{\Lambda(n)}{n\log n}\frac{\log(x/n)}{\log x} +\bigg(\frac{1}{\log x}-\frac{2}{\sqrt{x}\log^2x}\bigg)\left(-\re\frac{\zeta'}{\zeta}(1+it)\right)  \\
							& \,\,\,\,\,\,\,\,\,+ \frac{1}{\sqrt{x}\log^2x}\log\left(\dfrac{t}{2\pi}\right)  +  \left(2+\dfrac{7}{2\sqrt{x}}\right)\frac{1}{t^2\log^2x}.
						\end{split}
					\end{equation}
					From Theorem \ref{thm:Rezeta'/zeta}, we deduce that, for $t\geq e^{18}$,
					\begin{align}  \label{1_07am}
						-\re\frac{\zeta'}{\zeta}\left(1+\ie t\right) \leq 2\log\log t +1-\gamma-\log4 - \frac{\eta^*\log\log t}{\log t},
					\end{align}
					where $\eta^*=5.583$. Combining Lemma \ref{12_01pm} and \eqref{1_07am} in \eqref{12_39pm}, we obtain, for $t\geq e^{18}$, $x\geq 13$,
					\begin{align*}
							\log |\zeta(1+it)| &  \leq \log\log x + \gamma
							+\frac{2\log\log t - \log x +1-\log4}{\log x} +\frac{\log t-4\log\log t+3\gamma-\log(\pi^2/2)}{\sqrt{x}\log^2 x} \\
							& \,\,\,\,\,\,\,\,\,\,\,\,\,-\bigg(\frac{1}{\log x}-\frac{2}{\sqrt{x}\log^2x}\bigg)\frac{\eta^{*}\log\log t}{\log t} +\frac{3}{t^2\log^2x}.
					\end{align*}
					In order for $\log\log t$ to appear as the leading term in Littlewood’s inequality, we require $\log x= 2(1+o(1))\log\log t$. We set $x=(c\log t)^2$, where $c>0$ will be specified later. For now, assume that $c<2$ and $(c\log t)^2\geq 13$. Since $2\log\log t - \log x = -2\log c$, it follows that
					\begin{align*} 
						\log |\zeta(1+it)|  \leq \log\log x + \gamma - \left({A(c)} - \dfrac{\beta(t)}{c\log x}+\dfrac{\eta^*\log\log t}{\log t}\right)\dfrac{1}{{\log x}},
					\end{align*}
					where $A(c)=2 \log c +\log 4-1$ and 
					$
					\beta(t)=1-\frac{4\log\log t}{\log t}+\frac{3\gamma-\log(\pi^2/2)}{\log t}+\frac{2\eta^{*}\log\log t}{\log^2 t}+\frac{6}{t^2}. 
					$ 
					For $t\geq e^{18}$ one clearly has $\beta(t)\leq 1- \theta(t)$, where $\theta(t)=\frac{3.332\log\log t}{\log t}$, which implies
					\begin{align} \label{2_37pm}
						\log |\zeta(1+it)|  \leq \log\log x + \gamma - \left({A(c)} - \dfrac{1-\theta(t)}{c\log x}+\dfrac{\eta^*\log\log t}{\log t}\right)\dfrac{1}{{\log x}}.
					\end{align}
					We now exponentiate both sides of \eqref{2_37pm} and apply the inequality $e^{-y}\leq 1-y+{y^2}/{2}$ for $y\geq 0$. Consequently, assuming that ${A(c)}+D(c,t,x)\geq 0$, where $D(c,t,x)=- \frac{1-\theta(t)}{c\log x}+\frac{\eta^*\log\log t}{\log t}$, we obtain 
					\begin{align} \label{5_43pm}
						\begin{split}
							|\zeta(1+it)| & \leq e^\gamma\bigg(\log x-A(c) -\dfrac{\eta^{*}\log\log t}{\log t}+\dfrac{1-\theta(t)}{c\log x}+\dfrac{(A(c)+D(c,t,x))^2}{2\log x}\bigg) \\
							&  = e^\gamma\bigg(\log x - A(c)  +\dfrac{2/c+A(c)^2}{2\log x}-\dfrac{\eta^{*}\log\log t}{\log t}-\dfrac{\theta(t)}{c\log x}+\dfrac{2A(c)D(c,t,x)+D(c,t,x)^2}{2\log x}\bigg).
						\end{split}
					\end{align}
					We then optimize the expression ${2}/{c}+A(c)^2$, choosing $c_0=1.0467$, which yields ${2}/{c_0}+A(c_0)^2=2.1388\ldots$. Observe that $t\geq e^{18}$ implies $x\geq 354.96\ldots>13$, and ${A(c_0)}+D(c_0,t,x)\geq 0$, so that the preceding assumptions are satisfied.
					
					Returning to the first line of \eqref{5_43pm}, we now use that ${A(c_0)}+D(c_0,t,x)\leq A(c_0)+\frac{5.112\log\log t}{\log t}$. Substituting $x=(c_0\log t)^2$, and noting that $\log x \geq 2\log\log t$, we obtain that $|\zeta(1+it)|$ is bounded by 
					\begin{align*} 
					& \,\,\,\,\,\,\,\,\, e^\gamma\bigg(\log x-A(c_0) -\dfrac{\eta^{*}\log\log t}{\log t}+\dfrac{1-\theta(t)}{c_0\log x}+\dfrac{1}{2\log x}\left(A(c_0)+\dfrac{5.112\log\log t}{\log t}\right)^2\bigg) \\
						&  \leq  e^\gamma\left(2\log\log t-\log 4 +1 +\dfrac{2.1388\ldots}{4\log\log t} +\left(\dfrac{5.112A(c_0)-3.332/c_0}{\log x }+\dfrac{(5.112)^2\log\log t}{2\log x\log t}-\eta^{*}\right)\dfrac{\log\log t}{\log t}\right).§
					\end{align*}
					Finally, recalling that $x\geq 354.96$, 
					we obtain, for $t\geq e^{18}$,
					\begin{equation*}
						|\zeta(1+it)|\leq 2e^\gamma\left(\log\log t - \log2+ \frac{1}{2}  +\frac{0.2674}{\log\log t}-\dfrac{2.676\log\log t}{\log t}\right).
					\end{equation*}
This proves \eqref{3pm}.
					

					
					

					\subsection{Proof of \eqref{3pm2}: lower bound for $|\zeta(1+it)|$}
					As in the previous section, we begin with Lemma \ref{logzeta}. Applying \eqref{12_14pm2}, we obtain, for $t\geq 2$ and $x\geq2$
					\begin{align*} 
							-\log |\zeta(1+it)| & \leq  -\re\sum_{n\leq x}\frac{\Lambda(n)}{n^{1+it}\log n}\frac{\log(x/n)}{\log x} +\bigg(\frac{1}{\log x}+\frac{2}{\sqrt{x}\log^2x}\bigg)\re\frac{\zeta'}{\zeta}(1+it)  \\
							& \,\,\,\,\,\,\,\,\,+ \frac{1}{\sqrt{x}\log^2x}\log\left(\dfrac{t}{2\pi}\right)  +  \left(2+\dfrac{7}{2\sqrt{x}}\right)\frac{1}{t^2\log^2x}.
					\end{align*}
					Next, observe that
					\begin{equation*}
						\re\sum_{n\leq x}\frac{\Lambda(n)}{n^{1+it}\log n}\frac{\log(x/n)}{\log x} = \re\sum_{n\leq x}\frac{\Lambda(n)}{n^{it}}\left(\frac{1}{n\log n}-\frac{1}{x\log x}\right) - \frac{1}{\log x}\re\sum_{n\leq x}\frac{\Lambda(n)}{n^{1+it}}\left(1-\frac{n}{x}\right).
					\end{equation*}
					Consequently, we obtain
					\begin{align*} 
						-\log |\zeta(1+it)| & \leq  -\re\sum_{n\leq x}\frac{\Lambda(n)}{n^{it}}\bigg(\dfrac{1}{n\log n}-\dfrac{1}{x\log x}\bigg) +\left(\re\sum_{n\leq x}\frac{\Lambda(n)}{n^{1+it}}\left(1-\frac{n}{x}\right) + \re\frac{\zeta'}{\zeta}(1+it)\right)\frac{1}{\log x}\\
						& \,\,\,\,\,\,\,\,\,\,\,\,+ \frac{1}{\sqrt{x}\log^2x}\log\left(\dfrac{t}{2\pi}\right)  +  \left(2+\dfrac{7}{2\sqrt{x}}\right)\frac{1}{t^2\log^2x}+\frac{2}{\sqrt{x}\log^2x}\,\re\frac{\zeta'}{\zeta}(1+it).
					\end{align*}
					Now, by Lemma \ref{12_31am}, Lemma \ref{11_18pm} and \eqref{12_14pm2}, we get for $t\geq 2$ and $x\geq 81$ in the above inequality,
					\begin{align*}
						-\log |\zeta(1+it)| & \leq  \sum_{n\leq x}\frac{\Lambda(n)}{n\log n}\dfrac{\log(x/n)}{\log x}-\log \zeta(2) +\frac{1}{\log x}\sum_{n\leq x}\frac{\Lambda(n)}{n}\left(1-\frac{n}{x}\right)   + \dfrac{0.249}{\sqrt{x}}+\dfrac{2}{\sqrt{x}\log x}  \\
						& \,\,\,\,\,\,\,\,\,+ \dfrac{1}{\sqrt{x}\log x}\left(1+\dfrac{1}{\log x}-\dfrac{1}{\sqrt{x}}\right)\log\left(\dfrac{t}{2\pi}\right)  + \dfrac{3}{t^2\log x} \\
						& \,\,\,\,\,\,\,\,\,+ \dfrac{2}{\sqrt{x}\log x}\bigg(1+\dfrac{1}{\log x}-\dfrac{1}{2\sqrt{x}}\bigg)\re \dfrac{\zeta'}{\zeta}(1+it).
					\end{align*}
							Applying Lemma \ref{12_01pm}, Lemma \ref{12_01pm2} and Theorem \ref{thm:Rezeta'/zeta},
							\begin{align*}
								-\log|\zeta(1+it)| & \leq \log \log x  + \gamma-\log\zeta(2)-\dfrac{1}{\log x}+ \dfrac{\log t}{\sqrt{x}\log x}\left(1+\dfrac{1}{\log x}-\dfrac{1}{\sqrt{x}}\right) \\ & \,\,\,\,\,\,\,\,\, - \dfrac{\log 2\pi}{\sqrt{x}\log x}\left(1+\dfrac{1}{\log x}-\dfrac{2}{\sqrt{x}}\right)+ \dfrac{0.249}{\sqrt{x}} + \dfrac{2}{\sqrt{x}\log x} +\dfrac{2|B|}{\sqrt{x}\log x}\left(1+\dfrac{1}{\log x}\right)   \\ & \,\,\,\,\,\,\,\,\,+ \dfrac{2}{\sqrt{x}\log x}\bigg(1+\dfrac{1}{\log x}-\dfrac{1}{2\sqrt{x}}\bigg)\left(2\log\log t+1-\gamma-\log 4+\dfrac{8\log\log t}{\log t}-\dfrac{\eta^+}{\log t}\right) \\
								& \,\,\,\,\,\,\,\,\, + \dfrac{3}{t^2\log x}- \dfrac{1}{6x^3\log x}.
							\end{align*}
							In order for $\log\log t$ to be the leading term in \eqref{3pm2}, we choose $\log x= 2(1+o(1))\log\log t$. The constant term is optimized by taking $x=\frac{\log^2t}{4}$. Note that $t\geq e^{18}$ implies $x\geq 81$. Then, the last line in the previous inequality is negative. Therefore, 
							\begin{equation} \label{6_37pm}
								-\log|\zeta(1+it)| \leq \log\log x  + \gamma- \log\zeta(2)+\frac{1}{\log x} + \frac{2}{\log^2x} + \frac{E(x)}{\sqrt{x}},
							\end{equation}
							where $b_1=4-\gamma-\log(8\pi^2)=-0.9461\ldots$, $b_2=4\log4-\eta^+=-3.1092\ldots$, and
							\begin{align*} 
								E(x)  =  2.249+\dfrac{b_1+2}{\log x}+\dfrac{b_1}{\log ^2x} + \dfrac{1}{\sqrt{x}}\left(3+\dfrac{b_2+3+\gamma+2\log2\pi}{\log x}+\frac{b_2}{\log^2x}-\dfrac{2}{\sqrt{x}}-\frac{b_2}{2\sqrt{x}\log x}\right).
							\end{align*}
							Since $E$ is decreasing, we see that $E(x)\leq 2.84$ for $x\geq 81$. 
							Taking the exponential of \eqref{6_37pm}, we obtain
							\begin{align} \label{4_27pm}
								\dfrac{1}{|\zeta(1+it)|}\leq \dfrac{e^\gamma}{\zeta(2)}\,\log x\cdot\exp\left(\frac{1}{\log x} + \frac{2}{\log^2x} + \frac{2.84}{\sqrt{x}}\right).
							\end{align}
							For $0\leq y\leq M$ we have that $\exp(y)=1+y+{y^2}/{2} + O^*\left(c_M\cdot y^3\right)$, where $c_M=(e^M-1-M-{M^2}/{2})/{M^3}$.
							Here, we take $y=y(x)=\frac{1}{\log x}+\frac{2}{\log^2x}+ \frac{2.84}{\sqrt{x}}\leq 0.647$, for $x\geq 81$, which implies
							$\exp(y)\leq 1+y+{y^2}/{2} + 0.198\,y^3$. 
							Therefore, for $x\geq 81$,
							\begin{align*} 
							\log x\cdot\exp\left(\frac{1}{\log x} + \frac{2}{\log^2x} + \frac{2.84}{\sqrt{x}}\right)\leq \log x+1+\dfrac{5}{2\log x}+\dfrac{R(x)}{\log^2(4x)},
							\end{align*}
							where
							\begin{align*}
							R(x)& =\left(\dfrac{2.84\log x}{\sqrt{x}}+\frac{\log x}{2}\left(y^2(x)-\dfrac{1}{\log^2x}\right)+0.198y^3(x)\log x\right)\log^2(4x).
						\end{align*}
We write $R(x)=R_1(x)+R_2(x)$, where
$$
R_1(x)
 =	\dfrac{2.84\log^2(4x)}{\sqrt{x}}\left(\log x + 1 +\dfrac{2}{\log x}\right)
 $$
 and
$$
R_2(x)=\left(\frac{\log(4x)}{\log x}\right)^2\left(2+\frac{2}{\log x}+\frac{2.84^2\log^3x}{2x}+0.198\left(1+\frac{2}{\log x}+\frac{2.84\log x}{\sqrt{x}}\right)^3\right)$$
are decreasing functions\footnote{One can prove that $R_1(x)$ is decreasing for $x\geq 81$ by setting $u=\log x$ and computing the logarithmic derivative.} for $x\geq 81$. In particular $R(x)\leq 81.107$ for $x\geq 81$. This implies that
							\begin{align*} 
	\log x\cdot\exp\left(\frac{1}{\log x} + \frac{2}{\log^2x} + \frac{2.84}{\sqrt{x}}\right) &\leq  \log x +1 + \frac{5}{2\log(4x)} + \frac{85.667}{\log^2(4x)}.
\end{align*}							
							Inserting this in \eqref{4_27pm}
							and replacing $x=\frac{\log^2t}{4}$, we obtain, for $t\geq e^{18}$,
							\begin{align*} 
								\left|\dfrac{1}{\zeta(1+it)}\right| \leq\frac{12e^\gamma}{\pi^2}\bigg(\log\log t -\log 2 +\frac{1}{2}+\frac{5}{8\log\log t} +\frac{10.7084}{(\log\log t)^2}\bigg).
							\end{align*}
This proves \eqref{3pm2}. The proof of Theorem \ref{thm:zeta} is thus complete.							

								\vspace{0.2cm}

							\section{Proof of Theorem \ref{thm:zeta'/zeta}}
							By Selberg's moment formula for the Riemann zeta-function \cite[Eq. (13.35)]{MV}, for any $x,y \geq 2$, $s\neq 1$ and $\zeta(s)\neq 0$,  we have
							\begin{equation}	\label{11_45pm}	
								\frac{\zeta'}{\zeta}(s) = -\sum_{n\leq xy}\frac{\Lambda_{x,y}(n)}{n^s} + \frac{1}{\log{y}}\sum_{\rho}\frac{x^{\rho-s}-(xy)^{\rho-s}}{\left(\rho-s\right)^{2}} + \frac{1}{\log{y}}\sum_{n=1}^{\infty}\frac{x^{-2n-s}-(xy)^{-2n-s}}{\left(2n+s\right)^{2}}-\frac{x^{1-s}-(xy)^{1-s}}{\left(1-s\right)^{2}\log y},
							\end{equation}
							where \[
							\Lambda_{x,y}(n) \de \left\{\begin{array}{ll}
								\Lambda(n), & 1\leq n\leq x, \\
								\Lambda(n)\frac{\log{({xy}/{n})}}{\log{y}}, & x<n\leq xy.
							\end{array}
							\right.
							\]
							Evaluating \eqref{11_45pm} at $s=\sigma+it$ with $\sigma\geq 1$ and $t\geq 0$ ($s\neq 1$), and assuming RH, we bound the sum of non-trivial zeros as 
							\begin{align*}
								\frac{1}{\log{y}}\left|\sum_{\rho}\frac{x^{\rho-(\sigma+it)}-(xy)^{\rho-(\sigma+it)}}{\left(\rho-(\sigma+it)\right)^{2}}\right|
								& \leq \frac{\big(y^{\frac{1}{2}}+1\big)(xy)^{-\frac{1}{2}}}{\log{y}}\sum_{\gamma}\frac{1}{\frac{1}{4}+(\gamma-t)^{2}},
							\end{align*}
							where we used that $|\rho-(\sigma+it)|^2\geq \frac{1}{4} + (\gamma-t)^2$. Similarly, the sum over trivial zeros satisfies
							\begin{align*}
								\frac{1}{\log{y}}\left|\sum_{n=1}^{\infty}\frac{x^{-2n-(\sigma+it)}-(xy)^{-2n-(\sigma+it)}}{\left(2n+(\sigma+it)\right)^{2}}\right|
								& \leq \frac{1}{(9+t^2)\log{y}}\sum_{n=1}^{\infty}\left(x^{-2n-1}+(xy)^{-2n-1}\right) = \frac{c_{x,y}}{9+t^2},
							\end{align*}
							where $c_{x,y}=\Big(\frac{1}{x^3-x}+\frac{1}{(xy)^3-(xy)}\Big)\frac{1}{\log y}$. Since $x,y\geq 2$, $0<c_{x,y}< 0.3$. Therefore, in \eqref{11_45pm} we arrive at
							\begin{align}	\label{11_45pm2}	
								\begin{split}
									\!\!\!	\frac{\zeta'}{\zeta}(s) & = -\sum_{n\leq xy}\frac{\Lambda_{x,y}(n)}{n^s} + O^*\left(\frac{\big(y^{\frac{1}{2}}+1\big)(xy)^{-\frac{1}{2}}}{\log{y}}\sum_{\gamma}\frac{1}{\frac{1}{4}+(\gamma-t)^{2}}\right) + O^*\left(\frac{c_{x,y}}{9+t^2}\right)-\frac{x^{1-s}-(xy)^{1-s}}{\left(1-s\right)^{2}\log y}.
								\end{split}
							\end{align}
							Setting $s=1+it$, with $t>0$ in \eqref{11_45pm2}, and using \eqref{12_14pm2} along with $x, y\geq 2$, we obtain
							\begin{equation} \label{1_42am}	
								\left|\frac{\zeta'}{\zeta}(1+it)\right| \leq  \sum_{n\leq xy}\frac{\Lambda_{x,y}(n)}{n}  +  \frac{\big(y^{\frac{1}{2}}+1\big)(xy)^{-\frac{1}{2}}}{\log{y}}\left(\log \left(\frac{t}{2\pi}\right) +2\re\frac{\zeta'}{\zeta}(1+it) + \frac{7}{2t^2}\right) + \dfrac{3.2}{t^2}.
							\end{equation}
							Now, we take $s=\sigma>1$, and thus $t=0$, in \eqref{11_45pm2}, and order conveniently to get that
							\begin{align}	\label{11_45pm3}	
								\sum_{n\leq xy}\frac{\Lambda_{x,y}(n)}{n^\sigma}  & = -\frac{\zeta'}{\zeta}(\sigma)-\frac{x^{1-\sigma}-(xy)^{1-\sigma}}{\left(1-\sigma\right)^{2}\log y}+ O^*\left(\frac{\big(y^{\frac{1}{2}}+1\big)(xy)^{-\frac{1}{2}}}{\log{y}}\sum_{\gamma}\frac{1}{\frac{1}{4}+\gamma^{2}}\right)+ O^*\left(\dfrac{c_{x,y}}{9}\right).
							\end{align}
							Under RH, we have $\sum_{\gamma}\frac{1}{\frac{1}{4}+\gamma^{2}}= \sum_{\rho}\frac{1}{|\rho|^2}=2|B|$. On the other hand, we know that (\cite[Exercise 6, p. 354]{MV}),
							$$-\frac{\zeta'}{\zeta}(\sigma)= \frac{1}{\sigma-1} -\gamma + O(\sigma-1)\,\,\,\, \mbox{as} \,\,\,\,\sigma\to 1.
							$$
							Therefore, letting $\sigma\to 1^+$ in \eqref{11_45pm3} we find that\footnote{This is the key point of divergence from the approach in \cite{ChiSim}. There, in \cite[Eq. (3.6)]{ChiSim}, the sum under consideration is estimated via \cite[Lemma 6]{ChiSim} and the classical conditional bound of Schoenfeld for $\psi(x)$. Here, we obtain an alternative estimate for this sum, leading to a significantly smaller error term.}
							\begin{align}	 \label{1_40am}
								\sum_{n\leq xy}\frac{\Lambda_{x,y}(n)}{n}  & = \log x -\gamma +\dfrac{\log y}{2}+ O^*\left(\frac{2|B|\big(y^{\frac{1}{2}}+1\big)(xy)^{-\frac{1}{2}}}{\log{y}}\right)+ O^*\left(\dfrac{c_{x,y}}{9}\right).
							\end{align}
							Now, we choose the parameters $y=e^{2\lambda}$ and $xy=\log^2t$, with $\lambda>0$ (under the assumptions $x,y\geq 2$). Thus, inserting \eqref{1_40am} in \eqref{1_42am} we obtain 
							\begin{equation*}	
								\left|\frac{\zeta'}{\zeta}(1+it)\right| \leq   2\log\log t  - \gamma - \lambda  +  \frac{e^\lambda+1}{2\lambda}\frac{1}{\log t}\left(\log \left(\frac{t}{2\pi}\right) +2\re\frac{\zeta'}{\zeta}(1+it) + 2|B|+ \frac{7}{2t^2}\right)+\dfrac{c_{x,y}}{9} + \dfrac{3.2}{t^2}.
							\end{equation*}
							Using Theorem \ref{thm:Rezeta'/zeta}, \eqref{3_35pm} and rearranging terms, we see that for $t\geq e^{18}$,
							\begin{align}	 \label{8_09pm}
								\begin{split}
									\left|\frac{\zeta'}{\zeta}(1+it)\right| & \leq   2\log\log t  - \gamma - \lambda+  \frac{e^\lambda+1}{2\lambda} + \frac{2(e^\lambda+1)}{\lambda}\dfrac{\log\log t}{\log t} \\
									& \,\,\,\,\,\,\,\,\, \, +\frac{e^\lambda+1}{2\lambda}\left(\dfrac{4-\gamma-\log(128\pi^2)}{\log t} + \dfrac{16\log\log t}{\log^2t} -\frac{17.308}{\log^2 t}\right)+\dfrac{c_{x,y}}{9}+ \dfrac{3.2}{t^2}.
								\end{split}
							\end{align}
							We choose $\lambda_0=2.1862$ in order to minimize the constant term in the first line of \eqref{8_09pm}. Now, $x\geq 4.089$ and $xy\geq 324$, and then $\frac{c_{x,y}}{9}\leq \big(\frac{0.06}{x^3}+\frac{0.056}{(xy)^3}\big)\frac{1}{\lambda_0}=\big(\frac{0.06e^{6\lambda_0}+0.056}{\lambda_0}\big)\frac{1}{\log^6t}<\frac{13652}{\log^6t}$. Then, 
							$$
							\frac{e^{\lambda_0}+1}{2\lambda_0}\left(\dfrac{16\log\log t}{\log^2t} -\frac{17.308}{\log^2 t}\right)+\frac{c_{x,y}}{9}+ \dfrac{3.2}{t^2} < \dfrac{3.648}{\log t}.
							$$
							for $t\geq e^{18}$. Inserting this in \eqref{8_09pm} we obtain 
							\begin{equation*}	
								\left|\frac{\zeta'}{\zeta}(1+it)\right|  \leq   2\log\log t  -\gamma  + 0.0784+\dfrac{9.0581\log\log t}{\log t} - \dfrac{4.773}{\log t}.
							\end{equation*} 
							This implies the desired result.

\begin{thebibliography}{9999}
								
								
								\bibitem{CChaM}
								E. Carneiro, V. Chandee and M. B. Milinovich, 
								\newblock Bounding $S(t)$ and $S_1(t)$ on the Riemann hypothesis,
								\newblock Math. Ann. 356 (2013), no. 3, 939--968.
								
								
								\bibitem{CChiM}  E. Carneiro, A. Chirre and M. B. Milinovich,
								\newblock Bandlimited approximations and estimates for the Riemann zeta-function,
								\newblock Publ. Mat. 63 (2019), no. 2, 601--661.
								
								\bibitem{CLV} E. Carneiro, F. Littmann and J. D. Vaaler, 
								\newblock Gaussian subordination for the Beurling-Selberg extremal problem, 
								\newblock Trans.
								Amer. Math. Soc. 365 (2013), no. 7, 3493–3534.
								
								\bibitem{CMi} E. Carneiro and M. B. Milinovich, 
								\newblock On Littlewood's estimate for the modulus of the zeta function on the critical line, 
								\newblock Proceedings of the International Conference Constructive Theory of Functions, Lozenets (2023), pp. 1-16, BAS, Sofia, 2024.
								
								
								\bibitem{CS}
								V. Chandee and K. Soundararajan,
								\newblock Bounding $|\zeta(\frac{1}{2}+it)|$ on the Riemann hypothesis,
								\newblock Bull. Lond. Math. Soc. 43 (2011), no. 2, 243--250.
								
								
								
								
								\bibitem{chirreGoncalves}
								A. Chirre and F. Gon\c{c}alves,
								\newblock Bounding the log-derivative of the zeta-function,
								\newblock  Math. Z. 300 (2022), no. 1, 1041--1053.
								
								\bibitem{ChiSim}  A. Chirre, M. V. Hagen and A.~Simoni\v{c} 
								\newblock Conditional estimates for the logarithmic derivative of Dirichlet $L$-functions,
								\newblock Indag. Math. (N.S.) 35 (2024), no. 1, 14--27.
								
								
								\bibitem{GG}
								D. A. Goldston and S. M. Gonek,
								\newblock A note on $S(t)$ and the zeros of the Riemann zeta-function,
								\newblock Bull. Lond. Math. Soc. 39 (2007), no. 3, 482--486.
								
								\bibitem{HaraldG} 
								H. A. Helfgott, 
								\newblock The ternary Goldbach problem,
								\newblock Second preliminary version. To appear in Ann. of Math. Studies, available at  {\url{https://webusers.imj-prg.fr/~harald.helfgott/anglais/book.html}}.
								
								
								
								
								\bibitem{Sound}
								Y. Lamzouri, X. Li and K. Soundararajan,
								\newblock Conditional bounds for the least quadratic non-residue and related problems,
								\newblock Math. Comp. 84 (2015), no. 295, 2391--2412.
								
								
								\bibitem{Langu}
								A. Languasco and T. Trudgian, Uniform effective estimates for $|L(1,\chi)|$, J. Number Theory 236 (2022), 245--260.
								
						
								
								\bibitem{Li1}
								J. E. Littlewood, 
								\newblock On the Riemann zeta-function,
								\newblock Proc. London Math. Soc. (2) 24 (1925),
								175–201.
								
								\bibitem{Li2}
								J. E. Littlewood, 
								\newblock On the function $1/\zeta(1 + it)$, 
								\newblock Proc. London Math. Soc. 27 (1928), 349–357.
								
								
								\bibitem{Lum} A. Lumley,
								\newblock Explicit bounds for $L$-functions on the edge of the critical strip, 
								\newblock J. Number Theory 188 (2018), 186–209.
								
								
								
								\bibitem{MV}
								H. L. Montgomery and R. C. Vaughan,
								\newblock {\it Multiplicative Number Theory: I. Classical Theory},
								\newblock Cambridge Studies in Advanced Mathematics 97, Cambridge University Press, 2006.
								
								\bibitem{SimonicPalo}
								N. Palojärvi and A.~Simoni\v{c},
								\newblock Conditional estimates for $L$-functions in the Selberg class,
								\newblock J. Number Theory 285 (2026), 135--193.
								
								
								
								\bibitem{SchoenfeldSharperRH}
								L.~Schoenfeld,
								\newblock Sharper bounds for the Chebyshev functions $\theta(x)$ and $\psi (x)$. {II},
								\newblock Math. Comp. 30 (1976), no.~134, 337--360.
								
								
								
								
								
								
								
								
								
								
							\end{thebibliography}
								\vspace{0.2cm}

							\section*{Acknowledgements}
							AC and BM were funded by the Vicerrectorado de Investigación (VRI) at PUCP through grant DGI-2025-PI1273. Part of this work was carried out as part of BM’s undergraduate thesis at the Pontifical Catholic University of Peru (PUCP). The authors would like to thank the anonymous referee for a careful reading of the manuscript and for several helpful comments and suggestions.

						\end{document}